\newcommand{\mc}{\mathcal}
\newcommand{\N}{\mathbb{N}}  
\newcommand{\R}{\mathbb{R}}  
\newcommand{\C}{\mathbb{C}}  
\renewcommand{\a}{\alpha}
\renewcommand{\b}{\beta}
\renewcommand{\d}{\delta}
\newcommand{\e}{\varepsilon}
\newcommand{\p}{\phi}
\newcommand{\s}{\psi}
\newcommand{\w}{\omega}
\DeclareMathOperator{\spt}{supp}
\DeclareMathOperator{\E}{\mathbb{E}}     % Expectation
\DeclareMathOperator{\Prob}{\mathbb{P}}  % Probability
\theoremstyle{plain}
\declaretheorem[name=Theorem,numberwithin=section]{thm}
\newtheorem{prop}[thm]{Proposition}
\theoremstyle{definition}
\newtheorem{definition}{Definition}[section]
\newtheorem{prob}{Problem}
\theoremstyle{remark}
\newtheorem{remark}{Remark}[section]
\newcommand\widecheck[1]{%
  \savestack{\tmpbox}{\stretchto{%
    \scaleto{%
      \scalerel*[\widthof{\ensuremath{#1}}]{\kern-.6pt\bigwedge\kern-.6pt}%
      {\rule[-\textheight/2]{1ex}{\textheight}}%
    }{\textheight}%
  }{0.5ex}}%
  \stackon[1pt]{#1}{\scalebox{-1}{\tmpbox}}%
}
\title{Tangency counting for well-spaced circles}
\author{Dominique Maldague}
\address{Dominique Maldague, University of Cambridge, UK and University of California, Los Angeles, USA}
\email{dm672@cam.ac.uk, dmal@math.ucla.edu}
\author{Alexander Ortiz}
\address{Alexander Ortiz, Rice University, Houston, TX, USA}
\email{ao80@rice.edu}
\date{\today}
\subjclass[2020]{Primary 42B10; Secondary 42B25}
\keywords{Circle tangencies, refined decoupling, tangency rectangles}
\begin{document}

\begin{abstract}
In the late 90's, Tom Wolff introduced the circle tangency counting problem in his expository article on the Kakeya conjecture. For collections of well-spaced circles, we break the  $N^{3/2}$-barrier, proving that a set of $N$ well-spaced circles has at most $N^{25/18+\varepsilon}$ sites of internal tangency. The circle tangency problem can be related to a problem about incidences between points in $\R^3$ and light rays. For this problem, we introduce a stopping time argument to extract maximal information about well-spaced points from a refined decoupling theorem for the light cone in $\R^3$, leading to sharp bounds on the number of $\mu$-rich tangency rectangles.
\end{abstract}

\maketitle

\section{Introduction}\label{sec:intro}

\subsection{Circle tangencies and unit distances} In 1999, Wolff introduced the following discrete tangency counting problem for sets of circles in the plane.
\begin{prob}\label{prob:pairs}
    Let $\mathcal C$ be a set of $N$ circles in the plane, no three of which are internally tangent at a point. Estimate the cardinality of the set
    \[
    \mathcal T_{pair}(\mathcal C) = \{(C,C')\in\mathcal C^2:\text{$C,C'$ are internally tangent}\}.
    \]
\end{prob}
\begin{figure}[t]
    \includegraphics[width=5cm]{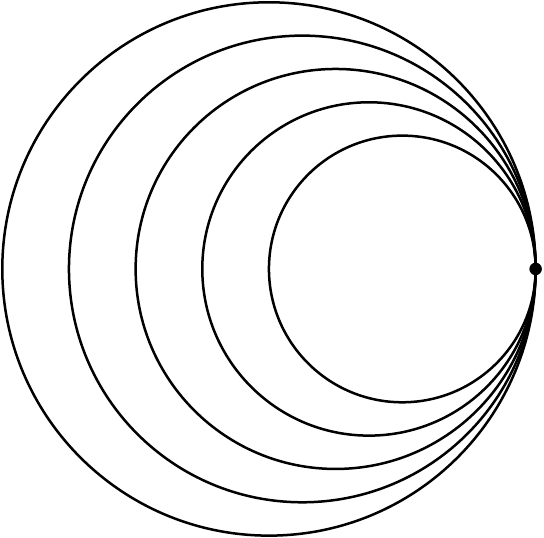}
    \caption{A clamshell of $5$ circles.}
    \label{fig:clam}
\end{figure}
The condition that no three circles are tangent at a point is a non-degeneracy condition. Without it, there are examples of collections of circles that have many tangency pairs. See Figure \ref{fig:clam} for an image of a \emph{clamshell} configuration, where every pair of distinct circles are internally tangent.

The tangency counting problem can be phrased as a problem about a set of points in $\R^3$ and \emph{light rays}, lines making an angle of $45^\circ$ with the $xy$-plane. To do this, we associate to the circle $C_{z,r}$ in the plane with center $z\in \R^2$ and radius $r > 0$ the point $(z,r)\in\R^2\times \R$. The condition that $C_{z,r},C_{z',r'}$ are internally tangent becomes the quadratic constraint:
\[
|z-z'|^2 = |r-r'|^2.
\]
Equivalently, the points $(z,r),(z',r')$ both must lie on a common light ray. In this formulation, Problem \ref{prob:pairs} is a cousin of the famous unit-distance problem in $\R^3$, first posed by Erd\H{o}s.

\vspace{1mm}\noindent\textbf{The unit distance problem: }% In this formulation, Problem \ref{prob:pairs} is a cousin of the famous unit-distance problem in $\R^3$, first posed by Erd\H{o}s.
In $\R^3$, Erd\H{o}s' unit-distance problem asks: given a discrete set $X \subset \R^3$ with $|X| = N$, what is the maximum number of pairs $(x, x') \in X^2$ whose distance is exactly $1$?

The unit-distance requirement can be expressed as a quadratic equation 
\[
|x-x'|^2 = 1,
\]  
and the goal is an estimate for the cardinality of the set
\[
U(X) = \{(x,x')\in X^2 : |x-x'|^2 = 1\}.
\]
Problem \ref{prob:pairs} and the unit-distance problem in $\R^3$ have historically been attacked with similar algebraic--combinatorial techniques. Indeed, Wolff \cite{wolff1999recent} pointed out that the cellular partitioning technique of Clarkson, Edelsbrunner, et al. \cite{clarkson1990combinatorial}, which achieves $O_\e(N^{3/2+\e})$ unit-distances in $\R^3$, can be adapted in a straightforward way to the formulation of Problem \ref{prob:pairs} in terms of points in $\R^3$ and light rays to prove that $|\mathcal T_{pair}(\mathcal C)| = O_\varepsilon(N^{3/2+\varepsilon})$ for collections $\mathcal C$, no three of which are tangent at a point. Both bounds are believed to be too big, but have been hard to improve. More recently, Ellenberg, Solymosi, and Zahl used a different approach in the tangency counting problem, removing the $\varepsilon$, showing $|\mathcal T_{pair}(\mathcal C)| = O(N^{3/2})$ \cite{ellenberg2016new}.

In \cite{zahl2019breaking}, Zahl used a more refined approach to the unit-distance problem, combining ideas from \cite{ellenberg2016new} with the method of polynomial partitioning to get a small improvement in the exponent of the unit-distance problem in $\R^3$, showing $|U(X)| = O_\e(N^{295/197+\e})$. For the circle tangency problem, Problem \ref{prob:pairs}, $N^{3/2}$ remains the bound to beat to this day. In this paper, we break the $3/2$-barrier for Problem \ref{prob:pairs} for collections of circles that are \emph{well-spaced}. We introduce the setup for well-spaced circles now. 

\begin{definition}\label{def:well-spaced}
    If $Q\subset\R^n$ is a unit cube, then a subset $X\subset Q$ is \emph{well-spaced} if $X$ is $\rho$-separated for some $\rho\in(0,1)$, and moreover, $X$ is maximal with respect to this property. In particular, $c_n\rho^{-n}\le|X|\le C_n\rho^{-n}$ for dimensional constants $0<c_n<C_n$.
\end{definition}
\begin{thm}\label{thm:well-spaced-tang}
    For every $\varepsilon>0$, there is a constant $A_\e$ so that the following holds.  
Let $X\subset [0,1]^2\times[1,2]$ be well-spaced, and let $\mathcal C_X$ denote the collection of circles
\[
\{C_{z,r}:(z,r)\in X\},
\]
where $C_{z,r}$ denotes the circle with center $z\in\R^2$ and radius $r>0$.

If no three circles of $\mathcal C_X$ are tangent at a point, then the following estimate holds:
\[
|\mathcal T_{pair}(\mathcal C_X)|\le  A_\e|X|^{25/18+\varepsilon}.
\]
\end{thm}
Theorem \ref{thm:well-spaced-tang} will be deduced from Theorem \ref{thm:rect1}, a sharp result for a continuum version of the problem we describe momentarily.

In addition to upper bounds for circle tangencies or unit-distances, one could ask about lower bounds. In this regard, well-spaced sets are important because they give the current best known lower bounds for both problems. In \cite{erdos1960sets}, Erd\H{o}s showed that amongst the points in a $N^{1/3}\times N^{1/3}\times N^{1/3}$ integer lattice, the most popular distance (and hence by a rescaling, the unit distance) appears at least $cN^{4/3}$ times. In \cite{schlag2003continuum}, Schlag showed that with $X$ equal to a $N^{1/3}\times N^{1/3}\times N^{1/3}$ lattice inside $[0,1]^2\times [1,2]$, the set $\mathcal C_X$ of circles has at least $cN^{4/3}$-many pairs that are internally tangent. Both these examples are well-spaced in the sense of Definition \ref{def:well-spaced}.

\begin{figure}
\includegraphics[width=0.5\textwidth, trim= 80 70 70 80, clip]{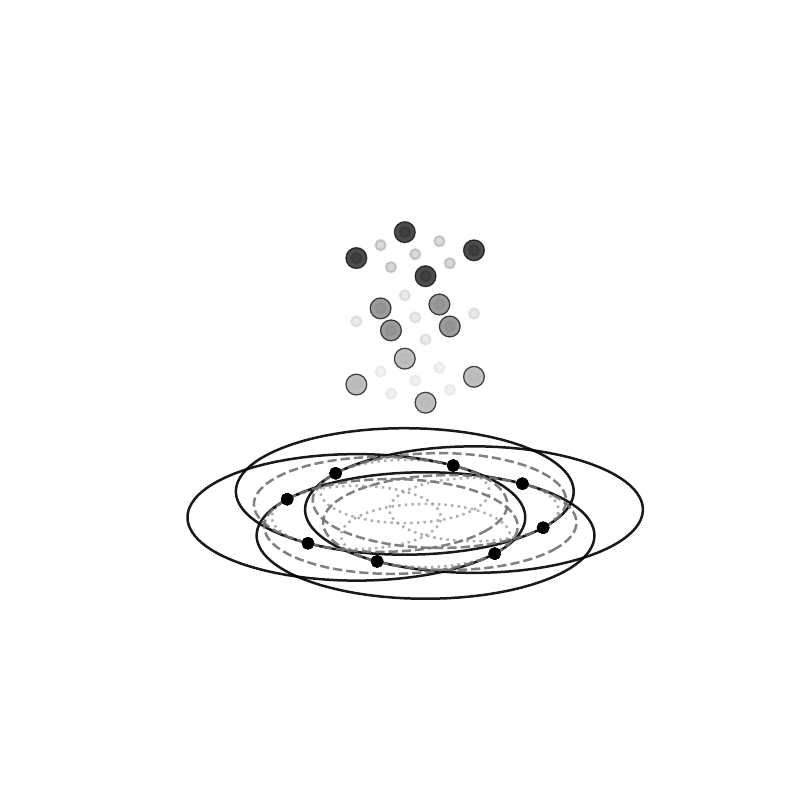}
    \caption{A lattice $X$ (points above) and a selected subset of circles $\mathcal{C}_X$ (below), with black dots marking the sites of internal tangencies between circles.}
    \label{fig:well-spaced}
\end{figure}

See Figure \ref{fig:well-spaced} for an illustration of the grid $X = (\frac{1}{2}\mathbb Z)^3\cap [0,1]^2\times[1,2]$ and a selection of some of the corresponding circles in the plane.

\subsection{Discrete versus continuum problems}
Wolff introduced Problem \ref{prob:pairs} as a discrete analog of the following Kakeya-type problem for circles. Define a BRK set (after Besicovitch--Rado \cite{besicovitch1968plane} and Kinney \cite{kinney1968thin} who initiated the study of these sets) to be a compact set $E\subset\R^2$ containing circles of every radius in the interval $[1,2]$. Every BRK set is a $1$-parameter family of $1$-dimensional objects (circles in this case), so whether such a set has dimension $2$ can be seen as a borderline question.

By taking the $\delta$-neighborhood $\mathcal N_\delta(E)$ of a BRK set $E$, determining the dimension of $E$ reduces to the geometry problem of understanding how thin neighborhoods of circles overlap. If two $\delta$-annuli overlap ``transversely,'' then their region of overlap is contained in the union of $O(1)$-many $\delta\times\delta$-squares. If two $\delta$-annuli overlap ``tangentially,'' then that means both annuli contain a common rectangle of dimensions approximately $\delta\times\sqrt\delta$. Intuitively, for a set of circles to have small dimension, many pairs of circles should overlap tangentially, minimizing how much the set ``spreads out.''

Wolff gave two proofs that the dimension of a BRK set is $2$. To describe one of his approaches, we introduce a variant of Problem \ref{prob:pairs} that focuses on the points of tangency for a collection $\mathcal C$, rather than the number of pairs of tangent circles.

\begin{prob}[Counting tangency sites]\label{prob:tangency-sites}
    If $\mathcal C$ is a collection of $N$ circles in $\R^2$, determine the number of sites of internal circle tangency, i.e., the cardinality of the set
    \[
    \mathcal T(\mathcal C) = \{z\in\R^2 : \text{at least two circles of $\mathcal C$ are internally tangent at $z$}\}.
    \]
    See Figure \ref{fig:circle-tangencies} for an illustration of a collection of $6$ circles with $6$ sites of internal tangency.
\end{prob}
\begin{figure}
    \centering
    \includegraphics[width=7cm]{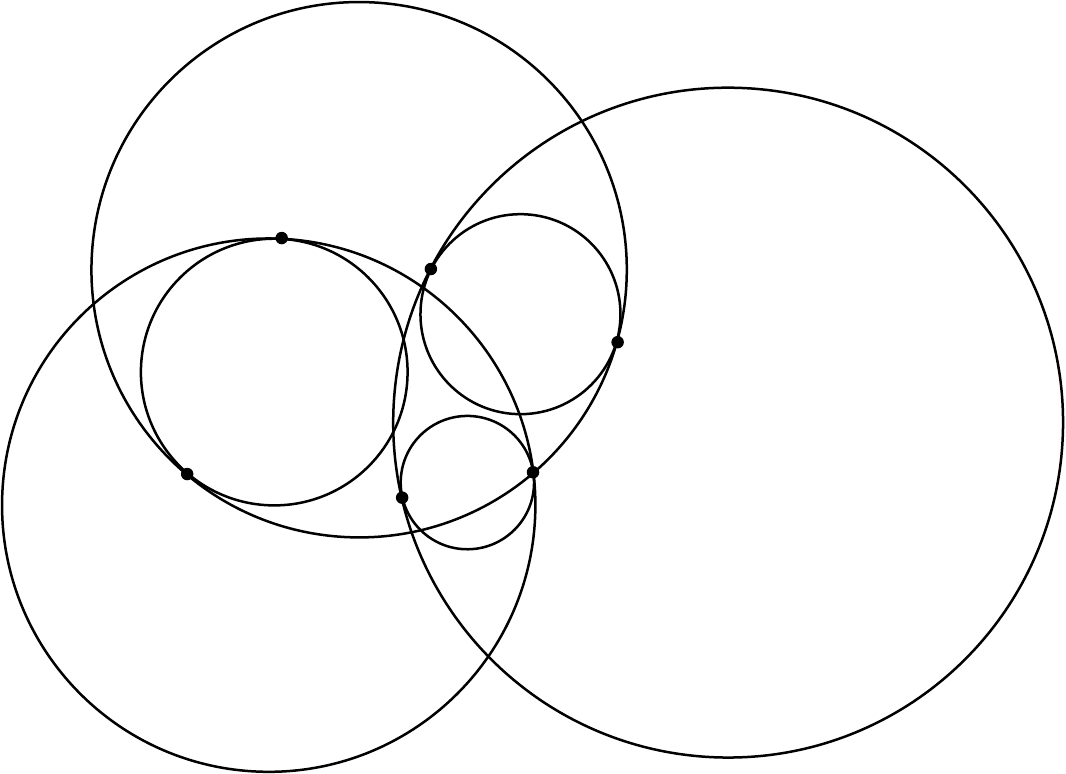}
    \caption{A set of $6$ circles with $|\mathcal T(\mathcal C)| = 6$.}
    \label{fig:circle-tangencies}
\end{figure}

If $\mathcal C$ is a family of circles, no $3$ of which are tangent at a point, then actually $|\mathcal T_{pair}(\mathcal C)| = |\mathcal T(\mathcal C)|$, so without the assumption that no three circles are tangent at a point, Problem \ref{prob:pairs} can be seen as a version of Problem \ref{prob:tangency-sites} with multiplicity. The current best bound for Problem \ref{prob:tangency-sites} is $|\mathcal T(\mathcal C)| = O(N^{3/2})$ by the work of Ellenberg--Solymosi--Zahl, and the conjecture is that $|\mathcal T(\mathcal C)| = O(N^{4/3})$ on the basis of the lattice example.

In his paper ``Local smoothing type estimates on $L^p$ for large $p$'' \cite{wolff2000local}, Wolff addressed a continuum analog of Problem \ref{prob:tangency-sites} using the cellular partitioning technique of Clarkson, Edelsbrunner, et al.

\begin{prob}\label{prob:rect}
    Let $X\subset[0,1]^2\times[1,2]$ be such that for every $x\ne x'\in X$, we have
    \[
    d(x,x') \ge \delta,
    \]
    for a small parameter $\delta>0$ (so $X$  is a $\delta$-separated set), and consider the collection $\mathcal C_X$ of circles defined by
    \[
    \mathcal C_X = \{C_x : x\in X\}.
    \]
    Say a $\delta\times\sqrt\delta$-rectangle $\Omega$ in the plane is a $\mu$-rich tangency rectangle for $\mathcal C_X$ if 
    \[
    \Omega\subset \mathcal N_{10\delta}(C_{x})
    \]
    holds for at least $\mu$ points $x\in X$. (The notation $\mathcal N_r(E)$ is used for the open $r$-neighborhood of a set $E$.)

    Say a collection $\mathcal R$ of $\delta\times\sqrt\delta$-rectangles in the plane is \emph{pairwise incomparable} if no two rectangles of the collection are contained in the $10$-fold dilation of the other about their centers.
    
    \textbf{Problem:} Estimate the maximal cardinality of a collection $\mathcal R_\mu$ of pairwise incomparable $\mu$-rich tangency rectangles for $\mathcal C_X$.
\end{prob}

We digest the statement of Problem \ref{prob:rect} a bit. Without the assumption that $X$ is $\delta$-separated, every circle of $\mathcal C_X$ is allowed to be an infinitesimal perturbation of a single fixed circle, and then there are approximately $\delta^{-1/2}$-many $\delta\times\delta^{1/2}$-rectangles contained in the $10\delta$-neighborhood of the fixed circle, regardless of the cardinality of $X$.

Heuristically, Wolff proved\footnote{Actually, Wolff proved a bound for the following bipartite variant of Problem \ref{prob:rect}. Given $X,Y\subset[0,1]^2\times[1,2]$ with $d(X,Y)\ge 0.1$, each of which is $\delta$-separated, and a pairwise incomparable collection $\mathcal R_{\ge\mu,\ge\nu}$ of $\delta\times\sqrt\delta$-rectangles $\Omega$ satisfying: for at least $\mu$ points $x\in X$ and at least $\nu$ points $y\in Y$, $\Omega\subset\mathcal N_{10\delta}(X)\cap\mathcal N_{10\delta}(Y)$, one has the bound $$|\mathcal R_{\ge \mu,\ge\nu}| \le C_\e\delta^{-\e}\left((\frac{|X||Y|}{\mu\nu})^{3/4} + \frac{|X|}{\mu} + \frac{|Y|}{\nu}\right).$$} that if $\mathcal R_\mu$ is a pairwise incomparable collection of $\mu$-rich $\delta\times\sqrt\delta$-tangency rectangles, one has the bound
\begin{equation}\label{eq:wolff-rect}
    |\mathcal R_\mu| \le C_\e\delta^{-\e}(\frac{|X|}{\mu})^{3/2},
\end{equation}
which we should think of as the analog of the $O_\e(N^{3/2+\e})$-bound for Problem \ref{prob:tangency-sites}. As a consequence of his estimate for tangency rectangles, Wolff proved a sharp \emph{decoupling} estimate for the truncated light cone
\[
\Gamma = \{(\xi_1,\xi_2,\xi_3) \in \R^3 : \xi_3 = \sqrt{\xi_1^2 + \xi_2^2}, 1\le \xi_3\le 2\}.
\]
Decoupling (also known as Bourgain--Demeter decoupling \cite{bourgain2015proof}) for the light cone, is a phenomenon in Fourier analysis which is a measurement of orthogonality between different frequency localized pieces of a solution $u$ to the wave equation in $L^p$ for $p > 2$, and Wolff used the sharp decoupling inequality for the cone in $L^{74+\e}$ to prove optimal $L^{74}$ local smoothing bounds for the wave equation in $2+1$ dimensions. These local smoothing bounds for the light cone imply that the dimension of a BRK set is $2$ \cite{schlag1996thesis}.

We prove a sharp bound for the number of $\mu$-rich tangency rectangles for collections of well-spaced circles, which apart from $\e$ is the tangency rectangle analog of the conjectured $O(N^{4/3})$ bound for Problem \ref{prob:tangency-sites} for well-spaced circles.
\begin{thm}\label{thm:rect1}
    For every $\varepsilon>0$, there is a constant $A_\varepsilon$ so the following holds for every well-spaced $X\subset [0,1]^2\times[1,2]$.  For any $\mu \ge 1$, and $\tau\in[|X|^{-1/3},1)$, let $\mathcal R_{\mu,\tau}$ be an arbitrary pairwise incomparable set of $\mu$-rich $\tau^2\times\tau$-rectangles for the collection $\mathcal C_X$. Then the following estimate holds:
    \[
    \mu^{4/3}|\mathcal R_{\mu,\tau}|\le
     A_\e|X|^{4/3+\e}.
    \]
\end{thm}
The estimate of Theorem \ref{thm:rect1} is sharp in the sense that for any $N>1$, there is a well-spaced set $X\subset[0,1]^2\times[1,2]$ of cardinality $N$ with the following property. For each $\tau\in[N^{-1/3},1)$, there exists $\mu \ge 1$ such that every $\tau^2\times\tau$-rectangle in a maximal pairwise incomparable collection $\mathcal R_{\mu,\tau}$ is approximately $\mu$-rich, and for which we have the lower bound
\[
\mu^{4/3}|\mathcal R_{\mu,\tau}| = \Omega_\e( N^{4/3-\e}).
\]
(See Theorem \ref{thm:sharp}.) Extending the estimate $\mu^{4/3}|\mathcal R_{\mu,\tau}|\le A_\e|X|^{4/3+\e}$ of Theorem~\ref{thm:rect1} to the range $\tau \in (0, |X|^{-1/3})$ would yield corresponding improvements to the exponent in Theorem~\ref{thm:well-spaced-tang} for well-spaced circles. Such an extension requires new ideas, and we plan to investigate this range of $\tau$ in future work.

\subsection{Decoupling and the proof of Theorem~\ref{thm:rect1}} Our proof of Theorem~\ref{thm:rect1} is based entirely on Fourier analysis and the lifting procedure that replaces circles by points in $\R^3$. We rely on an extension of this basic correspondence due to Wolff \cite{wolff2000local}. If $\Omega$ is a $\delta\times\sqrt\delta$-rectangle, and $C_{z,r}$ is a circle whose $10\delta$-neighborhood contains $\Omega$, then there is a $\delta\times\sqrt\delta\times 1$-box $P$ in $\R^3$ which is tangent to a light cone, and intersects the $10\delta$-ball centered at $(z,r)\in\R^2\times\R$. We refer to such a box in $\R^3$ as a \emph{lightplank} or a \emph{cone plank}. We say a lightplank is $\mu$-rich for a collection $X\subset[0,1]^2\times[1,2]$ if its $C\delta$-neighborhood contains at least $\mu$ points of $X$. For more details on the correspondence between tangency rectangles and lightplanks, we refer to the article \cite{ortiz2024sharp} of the second author.

Lightplanks are significant because in the restriction theory of the light cone, a \emph{wave packet} is essentially supported in a lightplank. The duality between tangency rectangles and lightplanks allows us to count $\mu$-rich tangency rectangles by counting the corresponding $\mu$-rich lightplanks, for which tools from restriction theory become available. If $\mathcal P_\mu$ is a pairwise incomparable collection of $\mu$-rich lightplanks for $X$, we can form a function $f = \sum_{P\in\mathcal P_\mu} \phi_P$, where each function $\phi_P$ is a smooth approximation of the indicator function of a lightplank $P$, but with Fourier transform $\widehat\phi_P$ supported in a dual lightplank of dimensions $1\times\delta^{-1/2}\times\delta^{-1}$ centered at the origin. 

The Fourier transform $\widehat f(\xi)$ is supported in a union of $1\times \delta^{-1/2}\times\delta^{-1}$-lightplanks containing the origin. We study $f$ using the well-known technique of high--low frequency analysis and consider the various frequency components of $f$, roughly corresponding to the regions $\{\frac{\delta^{-1}}{2K} < |\xi_3| <\frac{\delta^{-1}}{K}\}$ as $K$ varies. See the papers \cites{cohen2025lower,guth2019incidence,gan2022restricted,ren2023furstenberg,gan2024exceptional} for some select examples of this approach to problems in continuum incidence geometry, projection theory, and geometric measure theory. For our purposes, we need a more refined approach than has been considered previously. In particular, our high--low analysis relies on a version of \emph{refined decoupling} involving square functions of $f$ (Theorem \ref{Grefdec}).

Briefly, we consider the local behavior of the function $f$ near each point $x\in X$, and we use a stopping time algorithm to sift out the dominant frequency component of $f$ which is \emph{active} near $x$. Once we have localized $f$ in frequency near $x$, we apply our refined decoupling theorem for the light cone formulated in terms of square functions (Theorem~\ref{Grefdec}). The stopping time argument we use in the proof of Theorem \ref{thm:rect1} echoes the structure of square function estimates in the spirit of the Burkholder--Davis--Gundy inequality \cite{burkholder1972integral}. This viewpoint does not seem to have been widely explored in the Fourier analytic context. We believe this strategy, especially the use of refined decoupling as it is formulated in Theorem \ref{Grefdec} in terms of square functions, will be useful in approaching other continuum incidence geometry problems as well.

\subsection{Outline of the paper} In Section~\ref{sec:rectangles}, we describe the lifting procedure introduced by Wolff in~\cite{wolff2000local}, which translates $\delta$-close circle tangencies into point–lightplank incidences, and we prove our main estimate Theorem~\ref{thm:rect1}, as well as Theorem~\ref{thm:well-spaced-tang}.
In Section \ref{sec:decoupling} we prove a version of square function refined decoupling theorem for the light cone in $\R^3$, which is used as input to the proof of Theorem~\ref{thm:rect1}. In Section~\ref{sec:sharpness}, we illustrate the sharpness of Theorem~\ref{thm:rect1} by constructing a random well-spaced set of circles obeying the assumptions of Theorem~\ref{thm:rect1}, for which there is only one value of $\mu$.

\subsection{Notation}

We write \( \mathcal{N}_r(E) \) for the Euclidean \( r \)-neighborhood of a set \( E \subset \mathbb{R}^n \).

If \( A \leq C B \) for an absolute constant \( C \), we write \( A \lesssim B \); if both \( A \lesssim B \) and \( B \lesssim A \) hold, we write \( A \sim B \). When the implicit constant depends on parameters such as \( \varepsilon \), we write \( A \lesssim_\varepsilon B \) to emphasize the dependence.

We use the notation \( A \lessapprox B \) to mean that for every \( \varepsilon > 0 \), we have \( A \lesssim_\varepsilon \delta^{-\varepsilon} B \), where \( \delta > 0 \) is a small parameter. Similarly, when \( R > 1 \), the notation \( A \lessapprox B \) means \( A \lesssim_\varepsilon R^\varepsilon B \) for all \( \varepsilon > 0 \). In both cases, we write \( A \approx B \) when the corresponding two-sided bound holds.

We write \( \mathrm{RapDec}(f) \) for any quantity that is \( \lesssim_k R^{-k} \|f\|_\infty \) for all \( k \in \mathbb{N} \), where the implicit constant may depend on \( k \), but not on \( f \) or \( R \).

\subsection{Acknowledgments} We are grateful to Larry Guth for his thoughtful advice and for insightful discussions that enriched our understanding of this topic, as well as his constructive feedback on an earlier version of this work.

\section{Estimate of \texorpdfstring{$\mu$}{mu}-rich tangency rectangles}\label{sec:rectangles}

In this section, we will prove Theorem \ref{thm:rect1}. As a corollary, we will obtain Theorem~\ref{thm:well-spaced-tang}, breaking the 3/2-barrier on Problem \ref{prob:tangency-sites} for well-spaced circles.

We employ a lifting technique, originally introduced by Wolff, to convert $\mu$-rich tangency rectangles into $\mu$-rich lightplanks for the set of points $X \subset \mathbb{R}^3$ that encodes our collection of circles. We then apply a stopping time argument, in combination with a new refined decoupling theorem for the light cone (Theorem~\ref{Grefdec}), to establish the main estimate.

Throughout, we will identify points of $X
\subset\mathbb{R}^3$ with their corresponding circles in the plane with given center-radius pairs. For $x\in X$, written in coordinates as $(\bar x,x_3)\in [0,1]^2\times[1,2]$, let
\[
C_{\delta,x} = \{z\in \R^2:||z-\bar x|-x_3|<\delta\}
\]
be the $\delta$-thick annulus with core circle $x$. If $x,x'\in X$ and $\Delta(x,x')<\delta$, then $C_{10\delta,x}\cap C_{10\delta,x'}$ contains a $\delta\times\sqrt\delta$-tangency rectangle in the plane.

We adopt the notation and terminology from the second author’s article~\cite{ortiz2024sharp}, 
where Wolff's lifting procedure—translating circle tangencies into equivalent statements 
about points and lightplanks—is described in detail.  

Let $\Omega\subset\R^2$ be a $\delta\times\sqrt\delta$ rectangle, and set 
\[
Q := [0,1]^2\times[1,2].
\]
Define  
\[
\mathbf D_{10\delta}(\Omega) \;:=\; \{ x\in Q : \Omega \subset C_{10\delta,x} \},
\]
the set of points in $Q$ whose associated annulus $C_{10\delta,x}$ contains $\Omega$.  

If 
\[
|X\cap \mathbf D_{10\delta}(\Omega)| \gtrsim \mu
\quad\text{(or $\sim \mu$)},
\]
we say that $\Omega$ is a \emph{$\mu$‑rich tangency rectangle} (respectively, \emph{$\sim\mu$‑rich}). When $\Omega$ is a $\delta\times\sqrt\delta$ rectangle, 
$\mathbf D_{10\delta}(\Omega)$ is a $\sim1\times\sqrt\delta\times\delta$ lightplank.

\begin{definition}\label{def:lightplank}
    Let $\gamma\colon[-\pi,\pi)\to \mathbb R^3$ be the unit-speed curve defined by $\gamma(\theta) = \frac{1}{\sqrt 2}(\cos\theta,\sin\theta,1)$. If for some $A,B>0$, $\theta_P\in[-\pi,\pi)$ and $v\in\mathbb R^3$,
    \[
    P = v + \{a\gamma(\theta_P) + b\gamma'(\theta_P) + c(\gamma\times\gamma')(\theta_P): |a|\le \frac{A}{2},|b|\le \frac{\sqrt{AB}}{2},|c|\le \frac B2\},
    \]
    then we say $P$ is a \emph{$A\times \sqrt{AB}\times B$-lightplank (or cone plank)} with \emph{center} $v$.
\end{definition}

By rescaling, letting $R:= 1/\delta$, instead of considering $\delta$-neighborhoods of circles with center-radius pairs in $[0,1]^2\times[1,2]$, we will consider the unit-neighborhoods of circles with center-radius pairs in the cube $[0,R]^2\times[R,2R]$. This is done to match the notation more common to restriction theory, where we consider functions with frequency support in the unit ball, and physical scale features in a large ball of radius $R> 1$. 

We say two $1\times \sqrt R\times R$-lightplanks are \emph{$K$-incomparable} if neither is contained in the $K$-dilation of the other. Often we will just drop the ``$K$'' and say that two lightplanks are incomparable.

We let $\Gamma = \{(\xi_1,\xi_2,\xi_3)\in\R^3:\xi_3 = \sqrt{\xi_1^2 + \xi_2^2}, 1 \le \xi_3 \le 2\}$ be the usual segment of the light cone. Our proof of Theorem \ref{thm:rect1} will be based on the high--low method. Before giving the full argument, we give a sketch of the proof of the high-frequency dominated case.

\vspace{1mm}\noindent\textbf{A sketch of the high-frequency case:}
Suppose $X$ is maximally $\sqrt R$-separated in $[0,R]^2\times[R,2R]$, and let $\mathcal P_{\mu}$ be a pairwise incomparable collection of $\mu$-rich $1\times \sqrt R\times R$-lightplanks for $X$. Our goal is to show
\[
\mu^{4/3}|\mathcal P_\mu| \lessapprox |X|^{4/3}.
\]
Define $f = \sum_{P\in\mathcal P_\mu}\phi_P$ where each function $\phi_P$ is a smooth approximation of the indicator function of a lightplank $P$, but with Fourier transform $\widehat\phi_P$ supported in a dual lightplank of dimensions $1\times R^{-1/2}\times R^{-1}$ centered at the origin.

The starting point of our estimate is the calculation of the incidences between the disjoint unit-balls of the $1$-neighborhood $\mathcal N_1 (X)$ and the $\mu$-rich lightplanks in $\mathcal P_\mu$:
\begin{equation}\label{eq:incidences}
\mu|\mathcal P_\mu| \lesssim \int_{\mathcal N_1(X)} f.
\end{equation}
We assume $f = f_{high}$, where
\[
f_{high}(x) = \int_{|\xi_3|\ge 1} \widehat f(\xi) e^{2\pi ix\cdot\xi}\,d\xi
\]
is the \emph{high-frequency} part of $f$. The high-frequency part of $f$ is Fourier-supported in the $R^{-1}$-neighborhood of the truncated light cone
\[
\{(\xi_1,\xi_2,\xi_3):\xi_3^2 = \xi_1^2 + \xi_2^2, 1\le |\xi_3|\le 2\},
\]
where decoupling estimates can be applied. This is the motivation for considering the high--low method in our problem. By dyadic pigeonholing, we can assume that every ball of $\mathcal N_1(X)$ is contained in exactly $M$ lightplanks of $\mathcal P_\mu$, for some $M\in[1,R^{1/2}]$. In this scenario, we can apply refined decoupling for the light cone, which gives an improved bound over Bourgain--Demeter's decoupling inequality when the region in physical space we integrate over only receives contributions from a few wave packets (see Theorem A.1 from Harris' article for a clean statement that applies here \cite{harris2024projections}).

Since we assume every ball of $\mathcal N_1(X)$ is $M$-rich, and $|\mathcal N_1(X)| \sim |X|$, from \eqref{eq:incidences} we obtain
\begin{equation}\label{eq:double-count}
\mu|\mathcal P_\mu| \lessapprox M|X|.
\end{equation}
Refined decoupling is an $L^6(X)$ estimate. By refined decoupling, and the calculation $\|\phi_P\|_{6}^6\sim |P| = R^{3/2}$,
\begin{align*}
    M^6|X|\sim\int_X|f|^6 \lessapprox (\frac{M}{|\mathcal P_\mu|})^2(\sum_{P\in\mathcal P_\mu}\|\phi_P\|_6^2)^3 \sim (\frac{M}{|\mathcal P_\mu|})^2 |\mathcal P_\mu|^3 R^{3/2}.
\end{align*}
Rearranging and combining like terms, we have
\[
M^4|X|\lessapprox |\mathcal P_\mu|R^{3/2}.
\]
Applying \eqref{eq:double-count} (to lower bound $M^4\ge (\mu|\mathcal P_\mu|/|X|)^4$), and combining like terms, we get
\[
\mu^4|\mathcal P_\mu|^3 \lessapprox R^{3/2}|X|^3.
\]
Finally, by the well-spaced assumption, $|X| \sim R^{3/2}$, so we confirm
\[
\mu^4|\mathcal P_\mu|^3\lessapprox |X|^4,
\]
and taking $3$rd-roots yields the desired bound.\hfill$\square$

\vspace{1em}

% We described the argument heuristically when the simplest part of the Fourier support (the ``high" set, which looks like a truncated cone) dominates. As one looks closer to the origin on the Fourier side, the support of $\widehat{f}$ resembles a truncated cone less and less, and the refined decoupling result which applies gives \emph{a priori} weaker information. However, our stopping time algorithm pairs frequency localization closer to the origin with a large spatial averaging property. Essentially, we know that $f$ is roughly constant on balls of radius larger than $1$. Recall that our goal could be stated as a bound on $\mc{N}_1(X)$. We may think of refined decoupling as an estimate for level sets $U_\a$ of $f$. If $U_\a$ is much larger than $U_\a\cap\mc{N}_1(X)$, as it is when we know that $X$ is well-spaced and $f$ is locally constant on larger balls,then we have a gain. This exactly compensates for the ``weaker" refined decoupling right hand side when the dominant frequencies are closer to the origin. We now describe the theorem and proof in detail. 

% Instead of pigeonholing so that every point $x\in X$ is $r$-rich for the collection $\mathcal P_\mu$, it turns out to be more strategic to look at each $x\in X$, and identify which frequency component of $f$ dominates near $x$. By pigeonholing, we can ensure that for every $x\in X$, the same frequency component dominates, and this puts us in a position where refined decoupling applies.

% \noindent
We have sketched the proof of the estimate $\mu^{4/3}|\mathcal P_\mu| \lessapprox |X|^{4/3}$ in the high–frequency dominated case, where the Fourier support of $f$ is essentially the $R^{-1}$-neighborhood of $\Gamma$, and refined cone decoupling is strongest. If $f = f_{K}$ dominates the integral $\int_{\mathcal N_1(X)}f$, where $K\gg 1$, and
\[
f_K(x) = \int_{|\xi_3|\sim 1/K}\widehat f(\xi)e^{2\pi ix\cdot\xi}\,d\xi,
\]
then the bounds we get from refined decoupling are a priori weaker. To compensate for this, we use the observation that $f_K$ is locally constant on balls of radius $K$ (Proposition \ref{prop:loc-const}), and the separation assumption of $X$ to get an additional gain; namely that
\[
\int_{\mathcal N_1(X)} f_K \approx \frac{1}{K^3}\int_{\mathcal{N}_K(X)}f_K,
\]
which holds as long as $K$ is less than the separation between distinct points of $X$. If $f$ is dominated by even lower frequencies, then a short direct argument yields the desired bound, and we will treat this case separately at the end of the proof of Theorem \ref{thm:main}.

Another wrinkle in making the argument we sketched in the high--case work is that we assumed every point of $X$ was $M$-rich in the lightplanks $\mathcal P_\mu$. What we will do instead is for each $x\in X$, determine the dominant frequency component $f_{K(x)}$ near $x$ to begin, and dyadically pigeonhole so that $K(x)$ is constant for every $x\in X$.  To execute this strategy, we will define a family of averages of $f$ which are controlled by certain square functions of $f$ in the same spirit as the Burkholder--Davis--Gundy square function estimate. We will set up the estimate precisely in order to apply refined decoupling in the form of Theorem \ref{Grefdec}.

\begin{thm}\label{thm:main}
    For every $\varepsilon>0$, there is a constant $C_\varepsilon$ so the following holds for every $R\ge 1$.
    
    Suppose $X\subset[0,R]^3$ is a disjoint union of unit balls which are $1\le \rho\le R^{1/2}$-separated and such that $|X| \sim (R/\rho)^3$. For every $\mu\in[1,R^{3/2}]$, if $\mathcal P_\mu$ is an arbitrary pairwise incomparable collection of $\mu$-rich $1\times R^{1/2}\times R$-lightplanks for $X$, then
    \[
    \mu^{4/3} |\mathcal P_\mu|\le C_\varepsilon R^{\varepsilon} |X|^{4/3}.
    \]
\end{thm}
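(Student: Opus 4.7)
The plan is to combine wave-packet decompositions for $\mathcal P_\mu$ adapted to the cone $\Gamma$ with a stopping-time algorithm that iterates the refined decoupling theorem (Theorem \ref{Grefdec}) across dyadic multiplicity scales, in the spirit of a BDG-type square-function estimate. Standard dyadic pigeonholing first reduces to the case where every $P \in \mathcal P_\mu$ contains $\sim \mu$ points of $X$, every $x \in X$ lies in $\sim \nu$ planks (so $\mu |\mathcal P_\mu| \sim \nu |X|$), and the plank count per cap is uniform, at the cost of $\lessapprox 1$ factors.

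Next I would set up wave packets: for each $P \in \mathcal P_\mu$, take a bump $\phi_P$ of modulus $\sim 1$ essentially supported on $P$ with Fourier support in the dual cap $\theta_P \subset \Gamma$. With $f_\theta = \sum_{P \in \mathcal P_\mu(\theta)} \phi_P$ and $f = \sum_\theta f_\theta$, the $K$-incomparability within each cap forces $Sf(x)^2 := \sum_\theta |f_\theta(x)|^2 \sim \nu(x)$ on $X$. Introducing random signs $\epsilon_P$ and working with $\tilde f = \sum_P \epsilon_P \phi_P$, Khintchine's inequality supplies the pointwise lower bound $\E |\tilde f(x)|^6 \gtrsim \nu(x)^3$.

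The stopping time is driven by the plank multiplicity $M_B = \#\{P \in \mathcal P_\mu : P \cap B \neq \emptyset\}$ over $R^{1/2}$-balls $B \subset [0,R]^3$: for each dyadic $M$, let $Y_M = \bigcup\{B : M_B \sim M\}$ and $X_M = X \cap Y_M$. At each iteration I extract from $\mathcal P_\mu$ a maximal subcollection whose wave-packet activity is concentrated on a single multiplicity level $Y_M$, apply Theorem \ref{Grefdec} there to derive a level-wise bound, and recurse on the residual. The level-$M$ bound, combined with Cauchy--Schwarz and the pointwise constraint $\nu(x)\le M$ on $X_M$, should yield an inequality of the schematic form
\[
\sum_{x \in X_M} \nu(x)^3 \lessapprox M^2\,|X_M|,
\]
whence H\"older gives the incidence bound $\sum_{x \in X_M} \nu(x) \lessapprox M^{2/3}|X_M|$. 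Summing across dyadic multiplicity scales and combining with the plank--ball incidence identity $\sum_B M_B \sim |\mathcal P_\mu| R^{1/2}$, the $\rho$-separation count $|X \cap B| \lesssim (\sqrt R/\rho)^3$, and the total-mass balance $\sum_M |X_M| = |X|$, one arrives after optimization at $\mu^{4/3} |\mathcal P_\mu| \lessapprox |X|^{4/3}$.

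The hard part is that a single global application of refined decoupling yields only $|\mathcal P_\mu| \lessapprox |X|^3/(\mu^3 R^2)$, which matches the target in the large-$\mu$ regime but is strictly weaker when $\mu$ is small; the stopping time is precisely the device that interpolates between multiplicity regimes to produce a uniform sharp bound across the full range $\mu \in [1, R^{3/2}]$. Ensuring that the iterative extraction preserves the $K$-incomparability and uniform-richness hypotheses through each stage, and that the extracted layers together account for all of $\mathcal P_\mu$ without cumulative logarithmic losses, is the central technical difficulty --- this is where the BDG-style square function viewpoint highlighted in the introduction earns its keep, organizing the refinement across scales in analogy with martingale increments.
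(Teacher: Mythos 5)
Your high-level toolkit — wave packets adapted to $\Gamma$, a stopping-time algorithm, refined decoupling via Theorem \ref{Grefdec}, a BDG-type viewpoint — is the right one, but the stopping time you define does not do the job the paper needs it to, and the intermediate bound you extract from it does not follow from the tools you cite. The paper's stopping time $\mathbf{k}(x)$ compares \emph{nonnegative} smoothed averages $\mathcal A_k f$ of $f=\sum_P\phi_P$ (with $\phi_P\ge\mathbf 1_P$) across a logarithmic ladder of spatial scales $\rho_k$ running from $1$ up to the separation $\rho$, stopping at the first $k$ where $\mathcal A_{k-1}f(x) > R^\delta\mathcal A_k f(x)$. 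The entire point of this choice is that it \emph{localizes the Fourier support} of the increment $\mathcal H_kf=\mathcal A_{k-1}f-\mathcal A_kf$ to the annulus $\{\rho_k^{-1}<|\xi|<\rho_{k-1}^{-1}\}$, which enables the decomposition into $\tau_k$-sectors and, crucially, the key estimate $\sum_{\tau_k}|\mathcal H_kf_{\tau_k}|*W_{M,\tau_k}\lesssim R^\varepsilon\mathcal A_kf\lesssim R^{\varepsilon-\delta}\alpha$ that feeds the drop in the average into the refined-decoupling bound. Your stopping time, keyed to the plank multiplicity $M_B$ of $R^{1/2}$-balls, partitions physical space into multiplicity shells $Y_M$ but carries no frequency information whatsoever — it does not isolate an annulus in Fourier space — so there is no mechanism by which Theorem \ref{Grefdec} can be applied gainfully ``level by level.'' Concretely, the claimed schematic bound $\sum_{x\in X_M}\nu(x)^3\lessapprox M^2|X_M|$ is not a consequence of refined decoupling: the right-hand side of Theorem \ref{Grefdec} is $\beta^2\sum_\theta\int|g_\theta|^2$, and $\sum_\theta\int|g_\theta|^2\sim R^{3/2}|\mathcal P_\mu|$ is a global quantity with no relation to $|X_M|$.

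There is a second, related mismatch. You replace the paper's nonnegativity $\phi_P\ge\mathbf 1_P$ by random signs and Khintchine, but the nonnegativity is load-bearing: it is what makes $\mathcal A_kf$ a genuine local average, what makes $\sum_{\tau_k}\mathcal A_kf_{\tau_k}*W_{M,\tau_k}\sim\mathcal A_kf$ hold by the triangle inequality, and what underlies the locally constant property (Proposition \ref{prop:loc-const}) used to pass from $|U_{\alpha,\beta}\cap\Omega_k\cap X|$ (a count on $X$) to $|\tilde U_{\alpha,\beta}\cap\tilde\Omega_k|$ (a Lebesgue measure). Randomizing signs destroys this structure, and Khintchine's lower bound $\mathbb E|\tilde f|^6\gtrsim\nu^3$ cannot be naively interchanged with the level-set restriction $U_\beta$ on which Theorem \ref{Grefdec} operates. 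Finally, the iterative extract-and-recurse scheme on $\mathcal P_\mu$ that you identify as the ``central technical difficulty'' is unnecessary: the paper needs only a single pigeonhole over the $O(\varepsilon^{-1})$ stopping regions $\Omega_k$, with the endpoint case $k=N$ dispatched by the trivial $\rho$-separation count $|X\cap R^\varepsilon\tilde P|\lessapprox R^{3/2}/\rho^2$ together with $|\mathcal P_\mu|\lesssim_K R^2$.
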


\begin{proof}
We may assume without loss of generality that $R\ge R_0(\e)$ is sufficiently large depending on $\e$.

For each $\sim 1\times R^{-1/2}\times R^{-1}$ cone plank $\theta$ in a partition of $\mc{N}_{R^{-1}}(\Gamma)$, write $\mc{P}_{\mu,\theta}$ for the subcollection of planks $P$ in $\mc{P}_\mu$ for which the corresponding $\theta_P$ from Definition \ref{def:lightplank} lies in the angular sector corresponding to $\theta$. Define $f_\theta=\sum_{P\in\mc{P}_{\mu,\theta}} \phi_P$, where for each $P\in\mathcal P_\mu$, $\phi_P\ge \mathbf 1_P$ is a Schwartz function with Fourier support in $\theta-\theta\subset B(0,2)$.  Some properties of $f$ we shall use are as follows:
\begin{enumerate}
    \item $\|f\|_{L^1(\R^3)} \sim \mathrm{vol}(P)|\mathcal P_\mu|=R^{3/2}|\mathcal P_\mu|$
    \item $\mu|\mathcal P_\mu|\lesssim\int_X f$.
\end{enumerate}
Consider the following martingale-like sequence constructed from $f$, a sequence of scales $\{\rho_k\}$, and a Fourier decomposition of $B(0,2)$. On a first read, we recommend reading $\mathcal A_kf(x)$ as $\mathbb E_{\rho_k}f(x) := \frac{1}{|B(x,\rho_k)|}\int_{B(x,\rho_k)}f$, which is morally the same as $\mathcal A_kf$ up to a small error.

Let $\eta$ be a radial bump function supported in $B(0,1)$, satisfying $\eta\equiv 1$ on $B(0,1/2)$. Let $\eta_r(x)=\eta(x/r)$ be the $r$-ball localized version of $\eta$, satisfying $\eta_r\equiv 1$ in $B(0,r/2)$. Assume that $R,\rho\in2^{\mathbb{Z}}$. For $R=2^m$, let $\rho_k=2^{k\lceil{m\e}\rceil}$. Let $N$ satisfy $\rho_{N-1}<\rho\le \rho_N$, so $N\lesssim \e^{-1}$, in particular. For $\rho^{-1}\le \rho_k^{-1}\le 1$, write $\w_k(x)=\frac{\rho_k^{-3}}{(1+|\rho_k^{-1}x|^2)^{\e^{-1}}}$ and note that $|\widecheck{\eta}_{\rho_k^{-1}}|(x)\lesssim_\e \w_k(x)$. We define a version of $\w_k$ which has nice Fourier support and is uniformly pointwise comparable to $\w_k$. 

Let \(\tilde{\omega}_k(x)\) be defined by  
\[
\tilde{\omega}_k(x) = \rho_k^{-3}|\widecheck{\eta}|^2(\rho_k^{-1}x) + \rho_k^{-3}\sum_{j > 0} \frac{|\widecheck{\eta}|^2(2^{-j}\rho_k^{-1}x)}{2^{2\varepsilon^{-1}j}}.
\]
It is immediate from the definition that \(\tilde{\omega}_k\) satisfies the following properties:

\begin{enumerate}[label=\textbf{(W\arabic*)}]
    \item \label{W1} \textbf{Frequency localization.}  
    The Fourier support of \(\tilde{\omega}_k\) is contained in \(B(0, 2\rho_k^{-1})\). This follows from the identity \(|\widecheck{\eta}|^2 = \widecheck{\eta}^2\), so that \(\widehat{|\widecheck{\eta}|^2} = \eta * \eta\), which is supported in \(B(0,2)\).

    \item \label{W2} \textbf{Spatial decay comparable to \(\omega_k\).}  
    For \(|x| \lesssim \rho_k\), we have \(\tilde{\omega}_k(x) \sim \omega_k(x) \sim \rho_k^{-3}\). More generally, if \(C > 1\) with \(C \in 2^{\mathbb{Z}}\) and \(|x| \sim C \rho_k\), then  
    \[
    \tilde{\omega}_k(x) \sim \omega_k(x) \sim_\varepsilon \rho_k^{-3} C^{-2\varepsilon^{-1}}.
    \]
    In particular, \(\tilde{\omega}_k(x) \sim \omega_k(x)\) for all \(x \in \mathbb{R}^3\).
\end{enumerate}

For each $x\in\R^3$, we will define a stopping time $\mathbf k(x)$ which measures the first time at which the local behavior of $f$ goes from being ``crowded'' to ``sparse'' in transition from scales $\rho_{k-1}$ to $\rho_{k}$. Intuitively,
\begin{equation}\label{eq:intuition}
f(x) \approx \mathbb E_{\rho_1}f(x)\approx\dots\approx\mathbb E_{\rho_{\mathbf k(x)-1}}f(x)\gg \mathbb E_{\rho_{\mathbf k(x)}}f(x).
\end{equation}
The condition \eqref{eq:intuition} also morally ensures that we have control over the Fourier support of $f$: 
\[
f(x)\approx \mathbb E_{\rho_{\mathbf k(x)-1}}f(x) \approx (\widehat{f}\cdot\mathbf 1_{\{\rho_{\mathbf k(x)}^{-1}<|\xi|<\rho_{\mathbf k(x)-1}^{-1}\}})^{\vee}(x).
\]
To make this rigorous, and to define the stopping time $\mathbf{k}(x)$ precisely, we will use the weights $\tilde{\omega}_k$. Since
\[
f = \sum_\theta f_\theta \quad \text{and} \quad \operatorname{supp}(\widehat{f_\theta}) \subset \theta-\theta\subset B(0,2),
\]
we have the pointwise estimate
\[
f(x) \lesssim_\e f*\tilde\w_0(x).
\]

Fix $x \in B_R$. We now describe a \textbf{stopping time algorithm} that iteratively measures decay of the localized averages of $f$ at $x$. Define the initial average
\[
\mathcal{A}_0 f(x) := \sum_\theta f_\theta * \tilde\w_{0}(x).
\]
For each $k \geq 1$, define
\[
\tilde{\omega}_{k,\theta} := |\det T_{k,\theta}|\,\tilde{\w}_0\circ T_{k,\theta},
\]
where $T_{k,\theta}$ is an affine transformation, and $\tilde{\omega}_{k,\theta}$ is spatially localized to an origin-centered $\rho_k \times R^{1/2} \times R$ stack of $1\times R^{1/2}\times R$ dual planks to $\theta$. To align with the notation when $k = 0$, we define $\tilde\w_{0,\theta}=\tilde\w_0$ for every $\theta$.

For $k \ge 1$, set
\[
\mathcal{A}_k f := \sum_\theta f_\theta * \tilde\w_{0,\theta} * \tilde{\omega}_{1,\theta} * \cdots * \tilde{\omega}_{k,\theta}.
\]
Since $k \lesssim\e^{-1}$, 
\[
\tilde \w_{0,\theta}*\tilde{\omega}_{1,\theta} * \cdots * \tilde{\omega}_{k,\theta}(x) \sim_\e \tilde\omega_{k,\theta}(x).
\] 
Each $\tilde{\omega}_{k,\theta}$ is Fourier supported in the ball $B(0, 2\rho_k^{-1})$, and consequently, $\mathcal{A}_k f$ is also Fourier supported in this ball. Note that \(\mathcal{A}_{k-1}f\) and \(\mathcal{A}_k f\) differ by convolution with \(\tilde\omega_{k,\theta}\), where the somewhat intricate form of \(\mathcal{A}_k f\) is chosen to align with the weights in the refined decoupling Theorem~\ref{Grefdec} when we make the key estimate \eqref{eq:key}.

\vspace{1em}
\noindent
\textbf{Stopping Time Algorithm.} Let $\delta = \varepsilon^2$. We define the stopping time $\mathbf{k}(x)$ by the following rule:

\begin{itemize}
    \item If there exists $1 \le k < N$ such that
    \[
    \mathcal{A}_{k-1} f(x) > R^\delta  \mathcal{A}_k f(x),
    \]
    then define $\mathbf{k}(x)$ to be the smallest such $k$.
    
    \item If no such $k < N$ exists, set $\mathbf{k}(x) := N$.
\end{itemize}

\vspace{1em}
\noindent
\textbf{Stopping Regions.} Define measurable regions where the algorithm halts at each step:
\begin{align*}
\Omega_1 &:= \left\{ 
  x \in \mathbb{R}^3 : 
  \mathcal{A}_0 f(x) > R^\delta \mathcal{A}_1 f(x) 
\right\}, \\
\Omega_k &:= \left\{
  \begin{aligned}
    x \in \mathbb{R}^3 : \quad 
    &\mathcal{A}_j f(x) \le R^\delta \mathcal{A}_{j+1} f(x) 
      &&\text{for all } 0 \le j < k-1, \\
    &\mathcal{A}_{k-1} f(x) > R^\delta \mathcal{A}_k f(x)
  \end{aligned}
\right\}
&&\text{for } 2 \le k < N, \\
\Omega_N &:= \left\{
  \begin{aligned}
    x \in \mathbb{R}^3 : \quad 
    \mathcal{A}_j f(x) \le R^\delta \mathcal{A}_{j+1} f(x)
    &&\text{for all } 0 \le j < N
  \end{aligned}
\right\}.
\end{align*}

\vspace{1em}
\noindent
This procedure partitions $\mathbb{R}^3$ into disjoint stopping regions $\{\Omega_k\}_{1 \le k \le N}$, with each $\Omega_k$ corresponding to points where the function $f$ goes from being ``crowded'' to ``sparse'' in transition from scales $\rho_{k-1}$ to $\rho_{k}$ for the first time. Motivated by the analogy \(\mathcal{A}_k f \approx \mathbb{E}_{\rho_k} f\), we should think of each region \(\Omega_k\) as being approximately a union of \(\rho_{k-1}\)-cubes (see Proposition~\ref{eq:loc-const} for the precise formulation used in the proof).

Since there are $N\lesssim \e^{-1}$ regions $\Omega_k$, by pigeonholing, we may select $k\in[N]$ such that
\[ \int_X f\lesssim \e^{-1} \int_{\Omega_k\cap X} f.  \]

It follows by the definition of $\Omega_k$ that 
\begin{equation}\label{eq:time}
    \mu|\mathcal P_\mu| \lesssim_\e R^{(k-1)\delta}\int_{\Omega_k\cap X}\mathcal A_{k-1}f.
\end{equation}
Note that $R^{k\delta} \le R^{C\e}\approx 1$ for all $k\in[N]$. The rest of the analysis splits into two cases. The first case is that $1\le k<N$, which we treat presently. 
Define $\mathcal H_kf = \mathcal A_{k-1}f-\mathcal A_k f$ and note that for $x\in \Omega_k$, 
\[ \mathcal A_{k-1}f(x)\sim \mathcal H_kf(x)=|\mathcal H_kf(x)|\ge R^{\d} \mathcal A_kf(x), \]
so 
\[
\mu|\mathcal P_\mu| \lesssim_\e R^{(k-1)\delta}\int_{\Omega_k \cap X}|\mathcal H_kf|.
\]
The Fourier support of $\mathcal H_kf$ is contained in the annulus $\{\rho_{k}^{-1}<|\xi|<\rho_{k-1}^{-1}\}$. Consider a smooth partition of unity $\sum_\lambda \s_\lambda=1$ into dyadic annuli $\{|\xi|\sim\lambda\}$ in the range between $\rho_{k}^{-1}$ and $\rho_{k-1}^{-1}$ (where the $\s_\lambda$ are dilations of a single function). Since $\mathcal H_kf = \sum_\lambda \mathcal H_kf \ast \widecheck {\psi_\lambda}$, after dyadically pigeonholing, we may assume that the function $\mathcal H_kf$ is Fourier supported in the $\lambda$-dilation of $\mc{N}_{R^{-1}/\lambda^2}(\Gamma)$ (technically, we have to replace $\mathcal H_kf$ by $\mathcal H_kf*\widecheck{\s_\lambda}$, which by abuse of notation we will continue to denote by $\mathcal H_kf$).

Each $\theta$ has a corresponding angular sector of size $R^{-1/2}$. Write $\tau_k$ for angular sectors of size $R^{-1/2}/\lambda$, and $\theta\subset\tau_k$ if the sector corresponding to $\theta$ is contained in $\tau_k$, so that
\[
f = \sum_{\tau_k}\sum_{\theta\subset\tau_k} f_\theta =: \sum_{\tau_k}f_{\tau_k},
\]
and let
\[
\mathcal A_{j}f_{\tau_k} := \sum_{\theta\subset \tau_k}f_\theta * \tilde\w_{0,\theta}*\tilde{\w}_{1,\theta}*\cdots*\tilde{\w}_{j,\theta}.
\]
(We will only need $\mathcal A_{k-1}f_{\tau_k},\mathcal A_kf_{\tau_k}.)$

Likewise, define
\[
\mathcal H_k f_{\tau_k} := \mathcal A_{k-1}f_{\tau_k}-\mathcal A_{k}f_{\tau_k},
\]
so that
\[\mathcal H_kf=\sum_{\tau_k}\mathcal H_kf_{\tau_k}. \]
Note that $\mathcal H_k f_{\tau_k}$ is Fourier supported in a $\lambda\times R^{-1/2}\times R^{-1}\lambda^{-1}$-cone plank contained in $\{|\xi|\sim \lambda\}$, lying above the angular sector $\tau_k$.

By incorporating the frequency localization of \(\mathcal{H}_k f\), we are now positioned to apply the refined cone decoupling theorem (Theorem~\ref{Grefdec}) in order to further estimate \(\mu |\mathcal{P}_\mu|\). More precisely, we will apply Theorem~\ref{Grefdec} to a level set \(U_{\alpha, \beta} \subset \Omega_k\), where \(|\mathcal{H}_k f| \sim \alpha\) and 
\[
\sum_{\tau_k} |\mathcal{H}_k f_{\tau_k}|^2 * W_{M, \tau_k} \sim \beta.
\]
Here, the weight functions \(W_{M, \tau_k}(x) = w_{M, \tau_k}(\lambda x)\), with \(w_{M, \tau_k}\) given by Theorem~\ref{Grefdec}.

By the Cauchy--Schwarz inequality,
\[
|\mathcal H_kf| \le \sum_{\tau_k}|\mathcal H_kf_{\tau_k}| \le (\#\tau_k)^{1/2}(\sum_{\tau_k}|\mathcal H_kf_{\tau_k}|^2)^{1/2}\le R^{1/4}(\sum_{\tau_k}|\mathcal H_kf_{\tau_k}|^2)^{1/2}.
\]
Furthermore, note that $|\mathcal H_kf_{\tau_k}|^2$ is Fourier supported in an origin-centered $\sim\lambda\times R^{-1/2}\times \lambda^{-1}R^{-1}$-cone plank, so we have $|\mathcal H_kf_{\tau_k}|^2\lesssim |\mathcal H_k f_{\tau_k}|^2\ast W_{M,\tau_k}$ pointwise uniformly. Hence, if $\sum_{\tau_k}|\mathcal H_kf_{\tau_k}|^2* W_{M,\tau_k}\le R^{-100}$, then $|\mathcal H_kf|\le R^{-90}$. By a round of dyadic pigeonholing, we obtain for some $\alpha,\beta > R^{-100}$,
\begin{equation}\label{eq:stopping-time-level-set}
\mu|\mathcal P_\mu|\lessapprox \alpha |U_{\alpha,\beta}\cap \Omega_k\cap X| + R^{-90},
\end{equation}
in which
\[
U_{\alpha,\beta} = \{x\in\R^3:|\mathcal H_kf|\sim \alpha, \sum_{\tau_k}|\mathcal H_kf_{\tau_k}|^2*{W}_{M
,\tau_k} \sim \beta\}.
\]
   Before applying refined decoupling to bound $|U_{\a,\b}|$, we exploit the heuristic that $|\mathcal H_kf|$ and $\sum_{\tau_k}|\mathcal H_kf_{\tau_k}|^2*W_{M,\tau_k}$ are locally constant on $\rho_{k-1}$-balls to study the structure of $U_{\a,\b}\cap\Omega_k\cap X$. 

\begin{prop}[Locally constant property]\label{prop:loc-const}
    For each $\varepsilon > 0$, there exists a constant $K = K_\varepsilon > 1$ such that the following holds. Suppose $x \in U_{\alpha,\beta} \cap \Omega_k$. Then for any $\rho_{k-1}$-cube $Q$ containing $x$, we have
\[
Q \subset \tilde{U}_{\alpha,\beta} \cap \tilde{\Omega}_k,
\]
where
\[
\tilde{U}_{\alpha,\beta} \cap \tilde{\Omega}_k = \left\{
\begin{aligned}
    &|\mathcal{H}_k f| \sim_K \alpha, \quad
    \sum_{\tau_k} |\mathcal{H}_k f_{\tau_k}|^2 * W_{M,\tau_k} \sim \beta, \\
    &\mathcal{A}_{k-1} f \sim_K \alpha, \quad
    \mathcal{A}_k f \lesssim_K R^{-\delta} \alpha
\end{aligned}
\right\},
\]
and $a \sim_K b$ means $K^{-1} b \le a \le K b$.

In particular, for every $\rho_{k-1}$-cube $Q$ and every unit ball $q$ with $q \cap Q \ne \emptyset$, we have
\begin{equation} \label{eq:loc-const}
\frac{|\tilde{U}_{\alpha,\beta} \cap \tilde{\Omega}_k \cap Q|}{|Q|} \gtrsim |U_{\alpha,\beta} \cap \Omega_k \cap q|.
\end{equation}

\end{prop}
\begin{proof}[Proof of Proposition \ref{prop:loc-const}; the locally constant property]
    We begin by noting that for any $\theta$, any nonnegative function $g$, and any $y,z,z'\in\R^3$ such that $|z-z'|\lesssim\rho_{k-1}$, we have
    \[
    g(y)\tilde{\omega}_{k-1,\theta}(z-y) \sim g(y)\tilde{\omega}_{k-1,\theta}(z'-y). 
    \]
    It follows by nonnegativity of $f$ that $\mathcal A_{k-1}f(z)\sim \mathcal A_{k-1}f(z')$ for any such $z,z'$. Similar reasoning applied to $\tilde\omega_{k,\theta}$ shows that $\mathcal A_{k}f(z)\sim \mathcal A_{k}f(z')$ if $|z-z'|<\rho_{k-1}$ (actually, this holds as long as $z,z'$ belong to the same translation of a $\rho_k\times R^{1/2}\times R$-stack of lightplanks parallel to $\theta^*$, but we have no use for this stronger fact). For the statement about $\sum_{\tau_k}|\mathcal H_kf_{\tau_k}|^2*W_{M,\tau_k}$, we note that $W_{M,\tau_k}$ is approximately constant (depending only on $M=M(\e)$) on planks with dimensions $\lambda^{-1}\times R^{1/2}\times \lambda R$, which contain $\rho_{k-1}$-balls since $\lambda^{-1}\ge \rho_{k-1}$ and $\lambda R\ge \rho^{-1} R\ge R^{1/2}\ge \rho_{k-1}$. 
    
    Since $x\in\Omega_k$, for any $y\in Q$, we have $\mathcal A_{k-1}f(y)\sim \mathcal A_{k-1}f(x)\sim |\mathcal H_k f(x)|$, and $\mathcal A_{k}f(y) \sim \mathcal A_{k}f(x)< R^{-\delta}\mathcal A_{k-1}f(x)$. In particular, for any $y\in Q$, $\mathcal A_{k}f(y) < \frac{1}{2}\mathcal A_{k-1}f(y)$, since we may assume from the onset that $R\ge K^{\e^{-2}}$. Hence, for any $y\in Q$, by the reverse triangle inequality and our assumption that $x\in U_{\alpha,\beta}$,
    \[
    |\mathcal H_kf(y)| \ge \mathcal A_{k-1}f(y) - \mathcal A_{k}f(y) \gtrsim\alpha.
    \]
    This finishes the proof of the containment $Q\subset \tilde U_{\alpha,\beta}\cap\tilde\Omega_k$. The proof of \eqref{eq:loc-const} follows immediately.
\end{proof}

By refined decoupling from Theorem~\ref{Grefdec}, we have
\begin{equation}\label{refdecresult}
\alpha^6|\tilde{U}_{\alpha,\beta}\cap \tilde{\Omega}_k| \lesssim \int_{\tilde U_{\a,\b}\cap\tilde\Omega_k}|\mathcal H_kf|^6\lesssim_\e R^\e  \b^2\sum_{\tau_k}\int_{\R^3}|\mathcal H_kf_{\tau_k}|^2. 
\end{equation}
Note that $\b\sim \sum_{\tau_k}|\mathcal H_kf_{\tau_k}|^2*W_{M,\tau_k}$ on $\tilde{U}_{\a,\b}\cap\tilde{\Omega}_k$, so $\b\sim \|\sum_{\tau_k}|\mathcal H_kf_{\tau_k}|^2*W_{M,\tau_k}\|_{L^\infty(\tilde{U}_{\a,\b}\cap\tilde{\Omega}_k)}$.

We will first bound the $L^2$ integral on the right hand side of \eqref{refdecresult}. Note that the weight functions are $L^1$-normalized, meaning that $\tilde{\omega}_{j,\theta}$ satisfy $\max_{j,\theta}\int\tilde{\omega}_{j,\theta}\sim 1$. Since $k<N\lesssim\e^{-1}$, and $f_\theta\ge 0$, we have
\begin{equation}
    \|\mathcal H_kf_{\tau_k}\|_{L^1(\R^3)}\le\|\mathcal A_{k-1}f_{\tau_k}\|_{L^1(\R^3)}+\|\mathcal A_kf_{\tau_k}\|_{L^1(\R^3)} \lesssim \sum_{\theta\subset\tau_k}\int_{\R^3} f_\theta.
\end{equation}
Therefore, since each $f_\theta\lesssim 1$, and $\|\mathcal H_kf_{\tau_k}\|_{L^\infty(\R^3)}\lesssim \#(\theta\subset\tau_k)$, we have
\begin{align*}
    \sum_{\tau_k}\int_{\R^3} |\mathcal H_kf_{\tau_k}|^2 &\lesssim \max_{\tau_k}\|\mathcal H_kf_{\tau_k}\|_{L^\infty(\R^3)}\cdot\sum_{\tau_k}\big(\|\mathcal A_{k-1}f_{\tau_k}\|_{L^1(\R^3)}+\|\mathcal A_{k}f_{\tau_k}\|_{L^1(\R^3)}\big)\\
    &\lesssim \max_{\tau_k}\#(\theta\subset\tau_k)\sum_{\tau_k}(\sum_{\theta\subset\tau_k}\int_{\R^3} f_\theta) \\
    &= \max_{\tau_k}\#(\theta\subset\tau_k)\|f\|_{L^1(\R^3)} \sim \max_{\tau_k}\#(\theta\subset\tau_k)R^{3/2}|\mathcal P_\mu|.
\end{align*}
Hence we have the bound
\begin{align}\label{eq:ftau-bounded}
    \alpha^6|\tilde U_{\alpha,\beta}\cap\tilde \Omega_k|&\lessapprox  \max_{\tau_k}\#(\theta\subset\tau_k)^3\|\sum_{\tau_k}|\mathcal H_kf_{\tau_k}|*W_{M,\tau_k}\|_{L^\infty(\tilde{U}_{\a,\b}\cap\tilde{\Omega}_k)}^2 R^{3/2}|\mathcal P_\mu|.
\end{align}
Next, we apply the triangle inequality and nonnegativity of the functions $f_\theta,\tilde{\omega}_{j,\theta}$ to say that 
\begin{align}
\sum_{\tau_k}|\mathcal H_kf_{\tau_k}|*W_{M,\tau_k}    &\le \sum_{\tau_k}\mathcal A_{k-1}f_{\tau_k}*W_{M,\tau_k}+\sum_{\tau_k}\mathcal A_kf_{\tau_k}*W_{M,\tau_k} \label{pick}.
\end{align}
The function \(\sum_{\tau_k} \mathcal{A}_k f_{\tau_k} * W_{M, \tau_k}\) is pointwise comparable to \(\mathcal{A}_k f\), since \(\tilde{\omega}_{k, \theta} * W_{M, \tau_k} \sim \tilde{\omega}_{k, \theta}\) for \(M > \varepsilon^{-1}\) and \(\theta \subset \tau_k\). This holds because \(\tilde{\omega}_{k, \theta}\) is localized to an origin-centered \(\rho_k \times R^{1/2} \times R\) box, while \(W_{M, \tau_k}\) is localized to an origin-centered \(\lambda^{-1} \times R^{1/2} \times \lambda R\) \emph{axis-parallel} subset of that box.

\medskip

\textbf{The key point.} Since \(\rho_k^{-1} < \lambda < \rho_{k-1}^{-1}\) and \(\rho_k = R^{\varepsilon} \rho_{k-1}\), it follows that for \(M > \varepsilon^{-1}\) and \(\theta \subset \tau_k\),
\begin{equation} \label{eq:key}
\tilde{\omega}_{k-1, \theta} * W_{M, \tau_k} \lesssim R^{\varepsilon} \, \tilde{\omega}_{k-1, \theta} * \tilde{\omega}_{k, \theta}.
\end{equation}

Indeed, the left-hand side is approximately \(\lambda / R^{3/2}\) on a \(\lambda^{-1} \times R^{1/2} \times R\) block, while the right-hand side (before inserting the \(R^{\varepsilon}\) factor) is approximately \(\rho_k^{-1} / R^{3/2}\) on a larger \(\rho_k \times R^{1/2} \times R\) block. Since \(\lambda < R^{\varepsilon} \rho_k^{-1}\), the left-hand side may exceed $\tilde\w_{k-1,\theta}*\tilde\w_{k,\theta}$ by at most a factor of \(R^{\varepsilon}\), which justifies the inequality.

Therefore, picking up from \eqref{pick}, we have
\[ \sum_{\tau_k}|\mathcal H_kf_{\tau_k}|*W_{M,\tau_k}\lesssim R^\e\,\mathcal A_kf. \]
Now, by the definition of $\tilde{U}_{\a,\b}\cap\tilde{\Omega}_k$,
\[ \|\sum_{\tau_k}|\mathcal H_kf_{\tau_k}|*W_{M,\tau_k}\|_{L^\infty(\tilde{U}_{\a,\b}\cap\tilde{\Omega}_k)}\lesssim R^\e \|\mathcal A_kf\|_{L^\infty(\tilde{U}_{\a,\b}\cap\tilde{\Omega}_k)}\lesssim R^{\e}R^{-\delta} \a. \]

The number of $\theta\subset\tau_k$ is at most $\sim \lambda^{-1}$, so \eqref{eq:ftau-bounded} is bounded by
\begin{align*}
    &\lessapprox \lambda^{-3}\a^2 |\mc{P}_\mu|R^{3/2}
\end{align*}
and hence
\begin{equation}\label{eq:decoupling-input}
    \alpha^4|\tilde{U}_{\alpha,\beta}\cap \tilde{\Omega}_k| \lessapprox \lambda^{-3}\|f\|_{L^1(\R^3)}.
\end{equation}

Summing over dyadic $\rho_{k-1}$-cubes, we have by the locally constant property Proposition \ref{prop:loc-const} (and the assumption that $k < N$, so there are at most $O(1)$-many unit balls $q\subset X$ in any $\rho_{k-1}$-cube),
\begin{align}\label{eq:loc-const-prop}
|\tilde{U}_{{\alpha,\beta}}\cap \tilde{\Omega}_k| \ge |Q|\sum_{\substack{Q\ \text{dyadic}\ \rho_{k-1}\text{-cube}\\Q\cap X\ne \emptyset}}\frac{|\tilde U_{{\alpha,\beta}}\cap \tilde\Omega_k\cap Q|}{|Q|}\gtrsim \rho_{k-1}^3|U_{{\alpha,\beta}}\cap \Omega_k\cap X|. 
\end{align}
Plugging \eqref{eq:loc-const-prop} into \eqref{eq:decoupling-input} and using $\lambda \ge \rho_{k}^{-1}=R^{-\e}\rho_{k-1}^{-1}$, we get
\begin{equation}\label{eq:LC+Dec}
\alpha^4|U_{\alpha,\beta}\cap \Omega_k\cap X| \lessapprox \|f\|_{L^1(\R^3)}.
\end{equation}
Plugging \eqref{eq:stopping-time-level-set} into \eqref{eq:LC+Dec}, we get
\[
\bigg(\frac{\mu|\mathcal P_\mu|}{|U_{\alpha,\beta}\cap\Omega_k\cap X|}\bigg)^4|U_{\alpha,\beta}\cap\Omega_k\cap X|\lessapprox \|f\|_{L^1(\R^3)},
\]
so
\[
\mu^4|\mathcal P_\mu|^4 \lessapprox |X|^3\|f\|_{L^1(\R^3)} \sim |X|^3\cdot R^{3/2}|\mathcal P_\mu|.
\]
Since we assume $|X|\sim (R/\rho)^3$, where $1\le \rho\le R^{1/2}$, we obtain the desired bound
\[
\mu^{4/3}|\mathcal P_\mu| \lessapprox |X|^{4/3}.
\]

It remains to treat the case \( k = N \), where the stopping time algorithm does not terminate before reaching the top scale. Since \( \mathbf{k}(x) = N \) means that the inequality
\[
\mathcal{A}_j f(x) \le R^\delta \mathcal{A}_{j+1} f(x)
\]
holds for all \( 0 \le j < N \), we may iterate this bound and apply linearity and $\delta=\e^2$ to deduce
\begin{equation}\label{case:k=N}
\mu|\mathcal{P}_\mu| \lesssim_\varepsilon R^{N\delta} \int_{\Omega_N \cap X} \mathcal{A}_{N} f \lessapprox \sum_\theta \int_X \mathcal A_Nf_\theta.
\end{equation}
We introduce one last piece of notation. For each $\theta$, and $P\in \mathcal P_{\mu,\theta}$, let
\[
\mathcal A_{N,\theta}\phi_P := \phi_P\ast \tilde\w_{0,\theta}*\tilde\omega_{1,\theta}*\dots*\tilde\omega_{N,\theta}.
\]
Note that $\mathcal A_{N,\theta}\phi_P(x) \sim \frac{|P\cap B_\rho|}{|B_\rho|}=\rho^{-1}$ for $x$ in a concentric plank $\tilde P$ of approximate dimensions $\rho\times R^{1/2}\times  R$ containing $P$, and that $\mathcal A_{N,\theta}\phi_P(x) \le R^{-50}$ if $x\notin C_\e R^\e\tilde P$ for $C_\e>1$ a large enough constant.

By the $\rho$-separation of the set $X$, we have for each fixed plank $P \in \mathcal{P}_{\mu,\theta}$,
\begin{equation}\label{eq:single-fat-tube}
    \int_{X} \mathcal{A}_{N,\theta} \phi_P 
    \lessapprox \rho^{-1} |X \cap R^\varepsilon \tilde{P}| +  R^{-50}
    \lessapprox \rho^{-1}\frac{R^{3/2}}{\rho^2} + R^{-50}.
\end{equation}
Summing inequality \eqref{eq:single-fat-tube} over all planks $P \in \mathcal{P}_{\mu,\theta}$ and all $\theta$, and using \eqref{case:k=N}, we obtain
\[
\mu \lessapprox \frac{R^{3/2}}{\rho^3}.
\]

Since the family $\mathcal{P}_\mu$ consists of $K$-incomparable planks, we have the upper bound $|\mathcal{P}_\mu| \lesssim_K R^2$. Combining this trivial upper bound for $|\mathcal{P}_\mu|$ with the above estimate for $\mu$, and recalling that $|X|\sim(R/\rho)^3$, we conclude that
\[
\mu^{4/3}|\mathcal{P}_\mu| 
\lessapprox \left(\frac{R^{3/2}}{\rho^3}\right)^{4/3} R^2 
= \frac{R^4}{\rho^4} \sim |X|^{4/3}.
\]

This completes the analysis of the case $k = N$ and finishes the proof of Theorem~\ref{thm:main}.
\end{proof}

\begin{remark}
    By rescaling, Theorem \ref{thm:main} implies Theorem \ref{thm:rect1} stated in the introduction. To see this, assume Theorem \ref{thm:main}, and let $X\subset[0,1]^2\times[1,2]$ be well-spaced, and let $\tau\in[|X|^{-1/3},|X|^{-1/6}]$, and $\mathcal P_{\mu,\tau}$ be a collection of pairwise incomparable $\mu$-rich $\tau\times\tau^2$ tangency rectangles. By point-lightplank duality, $\mathcal P_{\mu,\tau}$ can be identified with a collection $\mathcal P$ of pairwise incomparable $\mu$-rich (in $X$) $1\times\tau\times\tau^2$-lightplanks. Rescaling by $R:= \tau^{-2}$ yields a set $\tilde X\subset[0,R]^2\times[R,2R]$ which is maximally $\rho:=|X|^{-1/3}\tau^{-2}\in[1,R^{1/2}]$-separated, and a set $\tilde{\mathcal P}$ of $\mu$-rich $1\times R^{1/2}\times R$ lightplanks.

    If $\tau\in(|X|^{-1/3},1)$, then the estimate of Theorem~\ref{thm:rect1} follows immediately using the well-spaced condition, since every $\tau\times\tau^2$ rectangle is automatically $\sim|X|\tau^3$-rich, and there are $\sim \tau^{-4}$-many such $\tau\times\tau^2$ rectangles.
\end{remark}

We will use Theorem~\ref{thm:rect1} to break the 3/2-barrier of Problem~\ref{prob:tangency-sites} 
for well-spaced sets of circles.  
The following is Theorem~\ref{thm:well-spaced-tang}, restated for convenience.

\begin{thm}
    For every $\varepsilon>0$, there is a constant $A_\e$ so that the following holds.  
Let $X\subset [0,1]^2\times[1,2]$ be well-spaced, and let $\mathcal C_X$ denote the collection of circles
\[
\{C_{z,r}:(z,r)\in X\},
\]
where $C_{z,r}$ denotes the circle with center $z\in\R^2$ and radius $r>0$.

If no three circles of $\mathcal C_X$ are tangent at a point, then the following estimate holds:
\[
|\mathcal T_{pair}(\mathcal C_X)|\le  A_\e|X|^{25/18+\varepsilon}.
\]
\end{thm}

\begin{proof}
    We will actually show that for an arbitrary well-spaced set of circles $X\subset[0,1]^2\times[1,2]$, the set
    \[
    \mathcal T(\mathcal C_X) = \{z\in\R^2 : \text{at least two circles of $\mathcal C$ are internally tangent at $z$}\}
    \]
    has cardinality $O_\e(|X|^{25/18+\e})$. By the discussion after the statement of Problem \ref{prob:tangency-sites}, this implies the corresponding estimate for $|\mathcal T_{pair}(\mathcal C_X)|$ when no three circles of $\mathcal C_X$ are tangent at a point.
    
    Begin by letting $\mathcal P$ be a maximal pairwise incomparable collection of $2$-rich (in $X$) $1\times |X|^{-1/3}\times |X|^{-2/3}$ lightplanks. If $q\in\mathcal T(\mathcal C_X)$ is given, then there necessarily exists a lightplank $P\in\mathcal P$ and two points $(x,r),(x',r')\in X\cap P$ such that the circles $C_{x,r},C_{x',r'}$ are internally tangent at $q$. Therefore, we have the following union bound for $|\mathcal T(\mathcal C_X)|$:
    \begin{equation}\label{eq:union-bound}
    |\mathcal T(\mathcal C_X)| \le \sum_{P\in\mathcal P}|\mathcal T(\mathcal C_{X\cap P})|.
    \end{equation}
    By dyadically pigeonholing, we may assume that every $P$ in the sum in \eqref{eq:union-bound} satisfies $|X\cap P|\sim \mu$ for some $\mu \ge 1$. Denote the collection of such planks as $\mathcal P_{\mu}$, so that
    \begin{equation}\label{eq:pigeonhole-planks}
    |\mathcal T(\mathcal C_X)| \lessapprox  \sum_{P\in\mathcal P_{\mu}}|\mathcal T(\mathcal C_{X\cap P})|
    \end{equation}
    Applying Wolff's bound $|\mathcal T(\mathcal C_{X\cap P})| \lessapprox |X\cap P|^{3/2} \sim \mu^{3/2}$ for each $P\in\mathcal P_\mu$, we obtain
    \[
    |\mathcal T(\mathcal C_X)|\lessapprox \mu^{3/2}|\mathcal P_\mu|.
    \]
    Applying Theorem \ref{thm:rect1}, we obtain
    \[
    |\mathcal T(\mathcal C_X)|\lessapprox \mu^{1/6}|X|^{4/3}.
    \]
    Since $X$ is well-spaced, it is $\sim |X|^{-1/3}$-separated, so $\mu\lesssim |X|^{1/3}$. Plugging this into the preceding expression yields
    \[
    |\mathcal T(\mathcal C_X)|\lessapprox |X|^{4/3+1/18} = |X|^{25/18},
    \]
    as claimed.
\end{proof}

\section{Decoupling}\label{sec:decoupling}
There is a natural resemblance between the sequence \(\mathcal{A}_k f\) and its increments \(\mathcal{H}_k f := \mathcal{A}_{k-1}f - \mathcal{A}_k f\) appearing in the proof of Theorem \ref{thm:main}, and the classical setting of martingales and their quadratic variations. In probability theory, the Burkholder–Davis–Gundy inequalities compare the size of a martingale to the square function formed from its increments \cite{burkholder1972integral}. Our setup, though entirely deterministic, echoes this structure: \(\mathcal{A}_k f\) behaves like a smoothed or “filtered” version of \(f\), while the increments \(\mathcal{H}_k f\) measure the energy transferred between scales. This analogy is purely heuristic, but we hope it helps place the arguments of Section~\ref{sec:rectangles} in a natural light.

Refined high–low frequency analysis, particularly the interplay between a function \(f\) and its square function \(\sum_\theta |f_\theta|^2\), emerged in earlier works such as~\cites{du2018pointwise, guth2020falconer}, and featured implicitly in the improved decoupling theorem for the parabola by Guth--Maldague--Wang~\cite{guth2021improved}, though the structural framework was not explicitly isolated there. A related approach, with a clearer articulation of the relationship between \(f\) and its square function, appears in the work of Fu--Guth--Maldague~\cite{fu2023decoupling} on decoupling inequalities for short Dirichlet sequences.

\subsection{Proof of the refined decoupling Theorem \ref{Grefdec}}
Let
\[
\Gamma = \{(\xi_1,\xi_2,\xi_3)\in\R^3:\xi_3 = \sqrt{\xi_1^2 + \xi_2^2}, 1 \le \xi_3 \le 2\}
\]
be the usual segment of the cone. For $\{a_\theta\}$ a partition of $S^1$ into $R^{-1/2}$-arcs (assuming $R$ is of the form $2^{2^{k_0}}$), define the set of cone planks $\Theta_\Gamma(R)=\{\theta\}$ to be 
\[ \theta=\mc{N}_{R^{-1}}(\Gamma)\cap \{\arg(\xi_1,\xi_2)\in a_\theta\} . \]
Let $w_{M,\theta}=|\det A_\theta|w_M\circ A_\theta$ be weight functions, in which $A_\theta$ is a linear function mapping the origin-centered $1\times R^{1/2}\times R$-cone plank $\theta^*$ to the unit cube, and where $w_M(x)=\frac{1}{(1+|x|)^M}$. 

\begin{thm}[Refined decoupling for $\Gamma$] \label{Grefdec} For each $\e>0$, there exists $C_\e\in(0,\infty)$ so that the following holds. Let $R\ge 1$ and consider $\mc{N}_{R^{-1}}(\Gamma)$. For each $M\ge C_\e$ and each Schwartz function $g_\theta\colon \R^3\to \C$ with Fourier support in $\theta$ we have
\begin{equation*}
    \int_{U_\b\cap B_R}|\sum_\theta g_\theta|^6 \le C_\e R^\e \b^2 \sum_\theta \int_{\R^3}|g_\theta|^2
\end{equation*}    
for each $R$-ball $B_R\subset\R^3$, where
\[ U_{\b}:=\{x\in \R^3: \b\le \sum_\theta|g_\theta(x)|^2*w_{M,\theta}\le 2\b\} .\] 
\end{thm}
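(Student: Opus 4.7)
My plan is to combine a wave packet decomposition with a Guth--Iosevich--Ou--Wang style refined decoupling for the cone, using the level set condition on $U_\b$ to control the density of active wave packets. First, for each cone plank $\theta$, decompose $g_\theta=\sum_T g_{\theta,T}$, where $T$ ranges over a tiling of $\R^3$ by $1\times R^{1/2}\times R$ lightplanks dual to $\theta$, and each wave packet $g_{\theta,T}$ has Fourier support in $\theta$, is spatially localized to $T$, and has amplitude $c_{\theta,T}:=\|g_{\theta,T}\|_{L^\infty}$. Since $|g_\theta|^2$ has Fourier support in $\theta-\theta$, a plank at the origin dual to $T$, the function $|g_\theta|^2$ is locally constant at the $T$-scale; hence $|g_\theta(x)|^2\lesssim |g_\theta|^2 * w_{M,\theta}(x)$ pointwise for $M$ sufficiently large, and by near-orthogonality of wave packets $|g_\theta|^2 * w_{M,\theta}(x)\sim \sum_{T\ni x} c_{\theta,T}^2$ up to rapidly decaying contributions. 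The defining condition of $U_\b$ therefore translates to $\sum_{\theta,\,T\ni x} c_{\theta,T}^2\lesssim \b$ for every $x\in U_\b$.

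Next I would bin the wave packets dyadically by amplitude, $\mc{T}_k:=\{(\theta,T):c_{\theta,T}^2\sim 2^k\}$, and set $f_k:=\sum_{(\theta,T)\in \mc{T}_k}g_{\theta,T}$. The previous step gives that each point of $U_\b$ --- and by local constancy each $R^{1/2}$-cube meeting $U_\b$ --- is crossed by at most $M_k\lesssim \b/2^k$ wave packets from $\mc{T}_k$. I would then apply the refined decoupling for the cone $\Gamma\subset\R^3$ at the critical exponent $p=6$, whose proof adapts the GIOW paraboloid template using the Bourgain--Demeter $l^2$ decoupling for the cone as input:
\[
\|f_k\|_{L^6(U_\b)}^6 \lesssim_\e R^\e M_k^2 \sum_{(\theta,T)\in \mc{T}_k} \|g_{\theta,T}\|_{L^6}^6.
\]

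For the final estimate I use that $g_{\theta,T}$ is essentially constant on $T$ at level $c_{\theta,T}$, so $\|g_{\theta,T}\|_{L^6}^6 \sim c_{\theta,T}^4 \|g_{\theta,T}\|_{L^2}^2$. For $(\theta,T)\in\mc{T}_k$, $c_{\theta,T}^4\sim 2^{2k}$, and this exactly cancels the factor $M_k^2\sim \b^2/2^{2k}$, giving $\|f_k\|_{L^6(U_\b)}^6 \lesssim_\e R^\e \b^2 \sum_{(\theta,T)\in\mc{T}_k}\|g_{\theta,T}\|_{L^2}^2$. Since $|f|\le \sum_k |f_k|$ with only $O(\log R)$ nontrivial levels, $|f|^6\lesssim (\log R)^5 \sum_k |f_k|^6$, and summing in $k$ together with the wave-packet $L^2$-orthogonality $\sum_T \|g_{\theta,T}\|_{L^2}^2 \sim \|g_\theta\|_{L^2}^2$ yields the claim.

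The hardest step will be establishing the GIOW-style refined decoupling for $\Gamma$ at $L^6$. While this parallels the paraboloid case, the cone's anisotropic dual-plank geometry and the mismatch between the $B_R$-scale at which Bourgain--Demeter decoupling for the cone is stated and the $R^{1/2}$-cube counting inside $U_\b$ both require care, in particular when combining a pigeonholed wave-packet density with the global $l^2$ decoupling inequality. A secondary technical point is precisely justifying the near-identity $|g_\theta|^2 * w_{M,\theta}(x)\sim \sum_{T\ni x} c_{\theta,T}^2$ so that the Schwartz tails of the weights $w_{M,\theta}$ can be absorbed into the $R^\e$ loss without degrading the cancellation between $M_k^2$ and $c_{\theta,T}^4$.
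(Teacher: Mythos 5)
Your plan rests on converting the pointwise level-set condition on $U_\beta$ into a wave-packet count per $R^{1/2}$-cube, and this conversion fails for the cone. The dual planks $\theta^*$ in $\R^3$ have dimensions $1\times R^{1/2}\times R$: they are only unit-thick in the light-ray direction. Consequently an $R^{1/2}$-cube $Q$ can meet $\sim R^{1/2}$ disjoint translates of $\theta^*$ for each direction $\theta$, while any single point $x$ lies in only one. The function $|g_\theta|^2*w_{M,\theta}$ is locally constant at the scale of a \emph{translate of $\theta^*$}, not at the scale of an $R^{1/2}$-cube, and the intersection of these plank-neighborhoods over all $\theta$ is only $\sim 1$. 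So your claim that ``by local constancy each $R^{1/2}$-cube meeting $U_\beta$ is crossed by at most $M_k\lesssim\beta/2^k$ wave packets from $\mc T_k$'' is not justified; the cube-count can exceed the pointwise count by a factor as large as $R^{1/2}$, and with the corrected $M_k$ the cancellation $M_k^2\cdot c_{\theta,T}^4\sim\beta^2$ is destroyed. This is exactly the anisotropy of the cone that distinguishes it from the paraboloid, where wave packets are $R^{1/2}\times\cdots\times R^{1/2}\times R$ and the pointwise and $R^{1/2}$-cube counts agree. A related issue: if you instead tried to fix this by invoking a GIOW-style counting at the unit-ball scale, you would essentially be invoking the square-function statement itself, so the argument becomes circular.

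The paper does not go through a wave-packet-counting intermediary at all. It proves the square-function refined decoupling for the parabola $\mc N_{R^{-1}}(\mathrm{Par})\subset\R^2$ directly by a multiscale broad-narrow-free iteration (Proposition \ref{multi}): one step of canonical Bourgain--Demeter decoupling from caps $\theta$ to intermediate caps $\tau$, tracking how the level $\beta$ interacts with the number of active $\tau$ per ball, then parabolic rescaling to pass from $D(M,\e,R)$ to $D(M,\e,R^{3/4})$. The cone case (Theorem \ref{Grefdec}) is then deduced from the parabola case by a cylindrical lifting: locally the cone is (parabola) $\times$ (line), so freezing the light-ray variable and applying Theorem \ref{Prefdec} fiberwise, followed by a rescaling induction on the angular sector size, gives the result. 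This route never needs to count planks through a fixed spatial cube, which is precisely where your argument breaks.
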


We prove Theorem \ref{Grefdec} in two parts, first by establishing Theorem \ref{Prefdec}, an analogous refined decoupling theorem for the parabola, and then using a lifting technique based on cylindrical decoupling to deduce Theorem \ref{Grefdec} for the cone from Theorem \ref{Prefdec}. 

\subsection{Refined decoupling for the parabola (Proof of Theorem \ref{Grefdec} part 1/2)}

Let $\mc{N}_{R^{-1}}({Par})=\mc{N}_{R^{-1}}(\{(t,t^2):-1\le t\le 1\})$. Let $\theta =\theta(l)$ be the intersection of $\left([lR^{-1/2},(l+1)R^{-1/2})\times\R\right)\cap\mc{N}_{R^{-1}}(Par)$ and the end pieces, so $\mc{N}_{R^{-1}}(Par)=\bigsqcup\theta$. 

Let $w_M$ be the weight function $w_M\colon \R^2\to[0,\infty)$ defined by $w_M(x)=\frac{1}{(1+|x|)^{M}}$. For each $\theta=\theta_l$, let $A_\theta\colon\R^2\to \R^2$ be the linear transformation \[A_\theta\colon(x_1,x_2)\mapsto x_1R^{1/2}(1,2lR^{-1/2})+x_2R(-2lR^{-1/2},1).\] Let $w_{M,\theta}\colon\R^2\to[0,\infty)$ denote $w_M\circ A_\theta^{-1}(x)$. 

\begin{thm}[Refined decoupling for ${Par}$] \label{Prefdec} For each $\e>0$ and $M>0$ sufficiently large depending on $\e$, there exists $C_{\e,M}\in(0,\infty)$ so that the following holds. Let $R\ge 1$ and consider $\mc{N}_{R^{-1}}(Par)$. For each Schwartz function $g_\theta\colon \R^2\to \C$ with Fourier support in $\theta$ we have
\[ \int_{U_{\b}\cap B_R}|\sum_\theta g_\theta|^6\le C_{\e,M} R^\e \b^2 \sum_\theta \int_{\R^2}|g_\theta|^2   \]
for each $R$-ball $B_R\subset\R^2$, where
\[ U_{\b}:=\{x\in \R^2: \b\le \sum_\theta|g_\theta|^2*w_{M,\theta}(x)\le 2\b\} .\] 
\end{thm}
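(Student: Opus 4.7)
The plan is to reduce Theorem~\ref{Prefdec} to the standard wave-packet refined decoupling for the parabola (Guth--Iosevich--Ou--Wang) at $p = 6$, in which the gain is measured by the maximum number of wave packets intersecting each $R^{1/2}$-cube, and then to translate that wave-packet count into the square-function quantity $\sum_\theta |g_\theta|^2 * w_{M,\theta}$. Concretely, the first step is a wave-packet decomposition $g_\theta = \sum_{T \in \mathbb T_\theta} a_T \phi_T$ in which each $\phi_T$ is an $L^2$-normalized Schwartz bump essentially supported on an $R^{1/2} \times R$ tube $T$ parallel to $\theta^*$ and Fourier supported in $\theta$. A dyadic pigeonholing of $|a_T|$ reduces matters to a sub-collection $\mathbb T'$ of size $N := |\mathbb T'|$ with common amplitude $|a_T| \sim A$, losing only $\log R$ absorbed into $R^\varepsilon$. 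A further pigeonholing over $R^{1/2}$-cubes restricts to a retained union $Y \subset U_\beta$ on which the wave-packet count $\#\{T \in \mathbb T' : T \cap Q \neq \emptyset\}$ is a common dyadic value $M$ for every $Q \subset Y$.

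The crucial step is to connect $\beta$ with $M$. On $Y$, approximate disjointness of parallel tubes of a fixed direction gives $|g_\theta|^2 * w_{M,\theta}(x) \sim A^2 \cdot \mathbf{1}[\exists\, T \in \mathbb T'_\theta \text{ near } x]$, and summing over $\theta$ on $U_\beta$ yields $\beta \sim A^2 M$, identifying the wave-packet count with the square-function density. Once this is in place, one invokes the standard refined decoupling output
\[
\|f\|_{L^6(Y)}^6 \lesssim R^\varepsilon M^2 \sum_T \|a_T \phi_T\|_{L^6}^6,
\]
combined with the routine normalizations $\sum_T \|a_T \phi_T\|_{L^6}^6 \sim A^6 N R^{3/2}$ and $\sum_\theta \|g_\theta\|_{L^2}^2 \sim A^2 N R^{3/2}$. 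Substituting $M = \beta/A^2$, the right-hand side collapses to $R^\varepsilon \beta^2 \sum_\theta \|g_\theta\|_{L^2}^2$, which is the desired bound; the $O(\log R)$ lost dyadic levels on $U_\beta$ are reabsorbed into $R^\varepsilon$.

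The main obstacle is supplying the $M^2$ factor in the refined decoupling step, rather than the trivial $(\#\theta)^2 = R$. This is a genuine decoupling inequality, proved either by citing the Guth--Iosevich--Ou--Wang framework directly, or by re-running its broad/narrow decomposition with induction on scale built on top of the Bourgain--Demeter $\ell^2$-decoupling theorem. The bookkeeping to convert the wave-packet output into the square-function formulation is mostly a matter of fixing the $L^1$-normalization of $w_{M,\theta}$ consistently with the 3D convention stated for Theorem~\ref{Grefdec}, together with the routine $L^p$-calculations for unit-amplitude wave packets on $R^{1/2} \times R$-tubes.
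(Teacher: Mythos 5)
The approach is fundamentally different from the paper's, and unfortunately the key step has a gap that makes it circular rather than a reduction to a known result.

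The step in question is your invocation of ``the standard refined decoupling output
\[
\|f\|_{L^6(Y)}^6 \lesssim R^\varepsilon M^2 \sum_T \|a_T\phi_T\|_{L^6}^6.
\]''
This is \emph{not} the Guth--Iosevich--Ou--Wang theorem. Their refined decoupling (Theorem 4.2 of \cite{guth2020falconer}) is an $\ell^2$ wave-packet estimate,
\[
\|f\|_{L^p(Y)} \lesssim_\varepsilon R^\varepsilon M^{\frac12-\frac1p}\Bigl(\sum_T \|f_T\|_{L^p}^2\Bigr)^{1/2},
\qquad 2\le p\le 6,
\]
and at $p=6$ after raising to the sixth power one gets $M^2\bigl(\sum_T\|f_T\|_{L^6}^2\bigr)^3$, not $M^2\sum_T\|f_T\|_{L^6}^6$. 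If you pigeonhole to $N := |\mathbb T'|$ wave packets of comparable $L^6$ norm $B$, the first quantity is $M^2 N^3 B^6$ while yours is $M^2 N B^6$: you have silently dropped a factor of $N^2$. Feeding the correct GIOW bound into your bookkeeping produces $R^\varepsilon N^2 \beta^2 \sum_\theta\|g_\theta\|_{L^2}^2$, which is $N^2$ too large and cannot be removed by re-normalizing the weights. You can see the discrepancy already in the simplest situation, $N$ disjoint unit-amplitude wave packets with $M=1$: GIOW gives $(\sum_T\|f_T\|_{L^6}^2)^3 \sim N^3 R^{3/2}$ while the truth (and the theorem) give $N R^{3/2}$.

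The $\ell^6$ (equivalently, square-function) form you invoke is genuinely stronger than GIOW's $\ell^2$ form, and after the pigeonholing it is in fact \emph{equivalent} to Theorem~\ref{Prefdec} itself. So ``citing the Guth--Iosevich--Ou--Wang framework directly'' does not supply the needed inequality, and ``re-running its broad/narrow decomposition'' does not automatically upgrade $\ell^2$ to $\ell^6$ over wave packets — this upgrade is precisely the content that has to be proved. The paper does this without ever resolving $g_\theta$ into wave packets: it defines $D(M,\varepsilon,R)$ as the optimal constant for the square-function formulation, proves a multiscale inequality $D(M,\varepsilon,R)\lesssim_\delta R^\delta D(M,\varepsilon,R^{3/4})$ by combining canonical Bourgain--Demeter decoupling into intermediate $R^{-1/8}$-caps with a pigeonholing that keeps track of the level set $U_\beta$ and its sub-level-sets $U_{\beta_0}(\tau)$ through parabolic rescaling, and then iterates. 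The crucial algebraic identity that replaces your $\beta\sim A^2 M$ is the superadditivity $\#\mathcal T(b)\,\beta_0\lesssim\beta$, which holds directly at the square-function level and does not need a wave-packet amplitude pigeonhole. Your heuristic $\beta\sim A^2 M$ (with $L^1$-normalized weights) is correct and is a useful sanity check, but a valid proof has to establish the square-function refined decoupling itself rather than derive it from the weaker $\ell^2$ wave-packet theorem.

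Two smaller points. First, the $\phi_T$ you describe cannot be simultaneously ``$L^2$-normalized'' and satisfy $\sum_T\|a_T\phi_T\|_{L^6}^6 \sim A^6 N R^{3/2}$; your later formulas correspond to $L^\infty$-normalized bumps, so pick one convention. Second, as stated in the paper the 2D weights $w_{M,\theta}=w_M\circ A_\theta^{-1}$ are $L^\infty$-normalized while the 3D weights $w_{M,\theta}=|\det A_\theta|\,w_M\circ A_\theta$ are $L^1$-normalized; you correctly flag this, but your identity $\beta\sim A^2M$ requires the $L^1$-normalization, so if you want to match the 2D statement as literally written you would pick up an extra $R^{3/2}$ per convolution that must be tracked.
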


We prove Theorem \ref{Prefdec} by defining $D(M,\e,R)$ to be the smallest constant such that 
\[ \int_{U_{\b}\cap B_R}|\sum_\theta g_\theta|^6 \le  D(M,\e,R) \b^2 \sum_\theta \int_{\R^2}|g_\theta|^2\]
for each $g_\theta$ and $U_{\b}$ from the set-up of Theorem \ref{Prefdec}. We will first perform canonical decoupling into $\tau$ which are parabola blocks in $\mc{N}_{R^{1/8}}(Par)$. Each $\tau$ has approximate dimensions $R^{-1/8}\times R^{-1/4}$. 

\begin{prop}[Multiscale inequality]\label{multi}For each $\e>0$, $M\in\N^+$, and $R\ge 1$, 
\[ D(M,\e,R)\lesssim_\d R^\d D(M,\e,R^{3/4})+\emph{RapDec}(1). \]
    
\end{prop}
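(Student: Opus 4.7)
The plan is to prove the multiscale inequality by a standard two-step decoupling argument: canonical Bourgain--Demeter $l^2$ decoupling at the intermediate $\tau$-scale, followed by parabolic rescaling and an application of the induction hypothesis $D(M,\e,R^{3/4})$ within each rescaled $\tau$. For each intermediate piece $\tau$ tiling $\mc{N}_{R^{-1/4}}(Par)$, set $f_\tau := \sum_{\theta\subset\tau}g_\theta$. The affine parabolic rescaling $A_\tau$ sending $\tau$ to the unit-scale $\mc{N}_{R^{-3/4}}(Par)$-neighborhood maps each $\theta\subset\tau$ to a piece $\tilde\theta$ of width $R^{-3/8}$—precisely the building block at scale $R^{3/4}$, which is where induction applies.

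First, cover $B_R$ by $R^{1/4}$-balls $B$ and apply Bourgain--Demeter $l^2$ decoupling at scale $R^{1/4}$ on each such ball:
\[
\Big\|\sum_\theta g_\theta\Big\|_{L^6(B)} \lesssim_\e R^{\e/4}\bigg(\sum_\tau \|f_\tau\|_{L^6(w_B)}^2\bigg)^{1/2}.
\]
Dyadically pigeonhole the partial-square-function values $\beta_\tau$ for $S_\tau := \sum_{\theta\subset\tau}|g_\theta|^2*w_{M,\theta}$ on $U_\beta\cap B$, paying only $(\log R)^{O(1)}$. Since $\sum_\tau S_\tau\sim\beta$ on $U_\beta$, this gives $\sum_\tau\beta_\tau\sim\beta$ on a pigeonholed subset of comparable measure, and Cauchy--Schwarz yields $\sum_\tau\beta_\tau^2\le\beta\cdot\max_\tau\beta_\tau\le\beta^2$.

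Second, for each $\tau$ apply the rescaling $A_\tau$. The rescaled functions $\tilde g_{\tilde\theta}$ are Fourier-supported in pieces of width $R^{-3/8}$; the weights $w_{M,\theta}$ transform, up to Jacobian factors and acceptable Schwartz-tail errors (which produce the $\mathrm{RapDec}(1)$ contribution), into the corresponding $w_{M,\tilde\theta}$ at scale $R^{3/4}$; and the pigeonholed subset where $S_\tau\sim\beta_\tau$ transforms into the analogous level set $\tilde U_{\tilde\beta_\tau}$ at scale $R^{3/4}$. The induction hypothesis applied to each $\tilde f_\tau$ yields
\[
\int_{\tilde U_{\tilde\beta_\tau}\cap B_{R^{3/4}}}|\tilde f_\tau|^6\le D(M,\e,R^{3/4})\,\tilde\beta_\tau^2\sum_{\tilde\theta}\int|\tilde g_{\tilde\theta}|^2.
\]
Undoing the rescaling produces matching Jacobian factors. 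Summing over $\tau$ using $\sum_\tau\beta_\tau^2\lesssim\beta^2$, summing over the $R^{1/4}$-balls $B$ tiling $B_R$, and using $\sum_\tau\sum_{\theta\subset\tau}\int|g_\theta|^2=\sum_\theta\int|g_\theta|^2$ (by disjointness of the $\theta$'s and Plancherel) reassembles the desired bound, with the $R^{\e/4}$ factor from Bourgain--Demeter and the pigeonholing loss absorbed into $R^\delta$ for any chosen $\delta>0$.

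The main obstacle is the bookkeeping in the rescaling step: since $w_{M,\theta}$ is not compactly supported and the Bourgain--Demeter bound on an $R^{1/4}$-ball interacts non-trivially with the refined level-set structure, one must carefully verify that the rescaled partial level $\tilde\beta_\tau$ matches the level of the corresponding weighted square function in rescaled coordinates, with errors absorbable into $\mathrm{RapDec}(1)$. A secondary technical point is ensuring that the cubing of the Bourgain--Demeter $l^2$ bound at the $L^6$-level combines consistently with the per-$\tau$ induction bound; this is where the pigeonholed decomposition of $\beta$ into $\{\beta_\tau\}$ plays its role. Once these transformations are correctly tracked, the combinatorial structure of the iteration is standard.
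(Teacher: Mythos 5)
Your plan follows the same skeleton as the paper's proof: cover $B_R$ by $R^{1/4}$-balls, pigeonhole the partial square functions $S_\tau=\sum_{\theta\subset\tau}|g_\theta|^2*w_{M,\theta}$ on the level set, apply canonical $l^2$ decoupling into the intermediate $\tau$-scale, parabolically rescale, and invoke the induction hypothesis $D(M,\e,R^{3/4})$ for each $\tau$. The pigeonholing to bound $R^{-100}$-small contributions by $\mathrm{RapDec}(1)$, and the identification of the rescaled level set with the $R^{3/4}$-scale level set, match the paper.

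The step I'd push back on is ``summing over $\tau$ using $\sum_\tau\beta_\tau^2\lesssim\beta^2$.'' After the $l^2$ decoupling on a ball you have $\big(\sum_\tau\|f_\tau\|_{L^6}^2\big)^3$, and the induction gives $\|f_\tau\|_{L^6}^6\lesssim D\,\beta_\tau^2 E_\tau$ with $E_\tau:=\sum_{\theta\subset\tau}\int|g_\theta|^2$, i.e.\ $\|f_\tau\|_{L^6}^2\lesssim D^{1/3}\beta_\tau^{2/3}E_\tau^{1/3}$. What closes the argument is H\"older with exponents $(3/2,3)$:
\[
\Big(\sum_\tau\beta_\tau^{2/3}E_\tau^{1/3}\Big)^3\le\Big(\sum_\tau\beta_\tau\Big)^2\Big(\sum_\tau E_\tau\Big),
\]
so the inequality you actually need is $\sum_\tau\beta_\tau\lesssim\beta$ (which you correctly identify earlier), not $\sum_\tau\beta_\tau^2\lesssim\beta^2$. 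The latter is true but does not combine with the cubed $l^2$ sum; if you try to use it via Cauchy--Schwarz you leak a factor of $\#\{\tau\}\sim R^{1/8}$ and the induction fails to close. The paper sidesteps the exponent bookkeeping by pigeonholing so that $\beta_{\tau,b}$ is a \emph{single} value $\beta_0$ and $\#\mathcal T(b)$ is uniform over balls $b$, replacing your H\"older with the flat bound $\big(\sum_{\tau\in\mathcal T(b)}\|f_\tau\|_{L^6}^2\big)^3\le\#\mathcal T(b)^2\sum_{\tau}\|f_\tau\|_{L^6}^6$, and then uses $\#\mathcal T(b)\,\beta_0\lesssim\beta$. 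Your non-uniform-$\beta_\tau$ route works, but only with the $(3/2,3)$-H\"older and $\sum_\tau\beta_\tau\lesssim\beta$; as written, the key summation step is stated with the wrong inequality.
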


\begin{proof} Let $\p_{R^{-1/4}}$ be a standard bump function adapted to $B^{(2)}(0,R^{-1/4})$. Cover the set $U_{\b}\cap B_R$ by a finitely overlapping set $b$ of $R^{1/4}$-balls. Let $\p_b$ be $|\widecheck{\p}_{R^{-1/4}}|^2(x-c_b)$, where $c_b$ is the center of $b$.

    Begin by performing some pigeonholing steps. For each $b$ and each $R^{-1/8}$-sector $\tau$, find $\b_{\tau,b}>0$ dyadic values in the range \[ R^{-100}\max_\theta\|g_\theta\|_{L^\infty(\R^2)}\le \b_{\tau,b}^{1/2}\le R^{1/2}\max_\theta\|g_\theta\|_{L^\infty(\R^2)} \] 
    such that
    \[ \int_{R^\e b}|g_\tau|^6\lesssim (\log R)^6\int_{U_{\b_{\tau,b}}\cap R^\e b}|g_\tau|^6+C R^{-100}\max_\theta\|g_\theta\|_{L^\infty(\R^2)}^6,\]
    where $R^\e b$ is the dilation of $b$ by a factor $R^\e$ with respect to the center of $b$ and $U_{\b_{\tau,b}}=\{x\in\R^2:  \sum_{\theta\subset\tau}|g_\theta|^2*w_{M,\theta}(x)\sim\b_{\tau,b} \}$. 

    The next step is pigeonholing the $\tau$ on each ball. Find $\mc{T}(b)$ so that
    \[ \int_{U_{\b}\cap b}|g|^6\lesssim (\log R)^6 \int_{U_{\b}\cap b}|\sum_{\tau\in\mc{T}(b)}g_\tau|^6+R^{-100}\max_\theta\|g_\theta\|_{L^\infty(\R^2)}^6,\]
    where $\b_{\tau,b}$ is within a factor of $2$ of a single value  for all $\tau\in \mc{T}(b)$.

    Finally, pigeonhole the $b$ so that for some set $\mc{B}$, 
    \[ \sum_b \int_{U_{\b}}|\sum_{\tau\in\mc{T}(b)}g_\tau|^6\lesssim (\log R)\sum_{b\in\mc{B}}\int_{U_{\b}}|\sum_{\tau\in\mc{T}(b)}g_\tau|^6 \]
    and $\#\mc{T}(b)$ is within a factor of $2$ of a single value for all $b\in\mc{B}$.   

    Now apply canonical decoupling into the $\tau$:
    \begin{align*} 
    \sum_{b\in\mc{B}}\int_{U_{\b}}|\sum_{\tau\in\mc{T}(b)}g_\tau|^6&\lesssim_\d R^\d \sum_{b\in\mc{B}} \left(\sum_{\tau\in\mc{T}(b)}\left(\int_{\R^2}|g_\tau \p_b|^6\right)^{2/6}\right)^{6/2} .
    \end{align*}
By the rapid decay of $\p_b$, each integral is bounded by 
\[ \int_{R^\e b}|g_\tau|^6+C_\e R^{-1000}\max_\theta\|g_\theta\|_{L^\infty(\R^2)}^6. \]
The pigeonholing steps mean that $\#\mc{T}(b)$ is approximately constant for all $b\in\mc{B}$, and for some $\b_0$, $\b_0\sim\b_{\tau,b}$ for all $\tau\in \mc{T}(b)$ and all $b\in\mc{B}$. It therefore remains to bound
\[ \#\mc{T}(b)^2\sum_{b\in\mc{B}}\sum_{\tau\in\mc{T}(b)}\int_{U_{ \b_0}(\tau)\cap R^\e b}|g_\tau|^6, \]
 in which $U_{ \b_0}(\tau)=\{x\in\R^2:\sum_{\theta\subset\tau}|g_\theta|^2*w_{M,\theta}(x)\sim\b_0\}$. Bound the previous displayed expression by summing over all $\tau$ and combining the integrals over the $b$: 
\[ R^{3\e} \#\mc{T}(b)^2\sum_\tau\int_{U_{\b_0}(\tau)\cap B_R}|g_\tau|^6. \]
By parabolic rescaling, each integral is bounded by  
\[ \int_{U_{ \b_0}(\tau)\cap B_R}|g_\tau|^6\le D(N,\e,R^{3/4})\b_0^2\sum_{\theta\subset\tau}\int_{\R^2}|g_\theta|^2. \]
It remains to show that 
\[ \#\mc{T}(b)\b_0\lesssim \b.\]
This is true since for each $\tau\in\mc{T}(b)$, there is an $x_\tau\in R^\e b$ such that $\b_0\sim \sum_{\theta\subset\tau}|g_\theta|^2*w_{M,\theta}(x_\tau)$. Let $x\in U_{\b}\cap b$. Since $|x-x_\tau|<R^\e R^{1/4}$, 
\[ \sum_{\theta\subset\tau}|g_\theta|^2*w_{M,\theta}(x_\tau)\sim \sum_{\theta\subset\tau}|g_\theta|^2*w_{M,\theta}(x). \]
Finally, note that
\[ \#\mc{T}(b)\b_0\lesssim \#\mc{T}(b)\sum_{\theta\subset\tau}|g_\theta|^2*w_{M,\theta}(x)\lesssim\sum_\theta|g_\theta|^2*w_{M,\theta}(x)\sim\b. \]
    
We also have that $R^{-100}\|g_\theta\|_{L^\infty(\R^2)}^6\lesssim R^{-50}\b^2\sum_{\theta}\int_{\R^2}|g_\theta|^2$. 
(It suffices to assume that $\b>R^{-10}\max_\theta\|g_\theta\|_{L^\infty(\R^2)}^2$ from the beginning.)
\end{proof}

\begin{proof}[Proof of Theorem \ref{Prefdec}] Let $\e>0$. By the multiscale inequality Proposition \ref{multi} iterated $k$ times, we have
\[ D(N,\e,R)\le C_\d^k R^{k\d}D(N,\e,R^{(3/4)^k})+kR^{-100}. \]

Let $k$ be the smallest integer so $(3/4)^k\le \e<(3/4)^{k-1}$. Then $k\le 1+ \ln \e^{-1}/\ln(4/3)$. Choose $\d>0$ so that $k\d<\e$.  Since $D(N,\e,R^{\e})\lesssim R^{C\e}$, it follows that
\[ D(N,\e,R)\lesssim C_\e R^{(C+1)\e}, \]
as desired. 
\end{proof}

\subsection{Refined decoupling for the cone (Proof of Theorem \ref{Grefdec} part 2/2)}

Now we prove Theorem \ref{Grefdec}, which we restate here for convenience.

\begin{thm}[Refined decoupling for $\Gamma$]\label{thm:proof-cone-dec} For each $\e>0$ and $M>0$ sufficiently large depending on $\e$, there exists $C_{\e,M}\in(0,\infty)$ so that the following holds. Let $R\ge 1$ and consider $\mc{N}_{R^{-1}}(\Gamma)$. For each Schwartz function $g_\theta\colon \R^3\to \C$ with Fourier support in $\theta$ we have
\begin{equation}
    \label{mainineq} \int_{U_\b\cap B_R}|\sum_\theta g_\theta|^6 \le C_{\e,M} R^\e \b^2 \sum_\theta \int_{\R^3}|g_\theta|^2
\end{equation}    
for each $R$-ball $B_R\subset\R^3$, where
\[ U_{\b}:=\{x\in \R^3: \b\le \sum_\theta|g_\theta(x)|^2*w_{M,\theta}\le 2\b\} .\] 
\end{thm}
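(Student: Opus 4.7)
The plan is to reduce Theorem~\ref{thm:proof-cone-dec} to the parabola refined decoupling (Theorem~\ref{Prefdec}) via a cylindrical-decoupling argument, exploiting the fact that $\Gamma$, restricted to an angular sector of bounded width, is essentially a parabolic cylinder. By a triangle inequality it suffices to handle a single angular sector $\omega \subset \Gamma$ of constant angular width $\sim 1$. Inside $\omega$ I would apply an affine change of coordinates sending the light-ray direction $\gamma(\theta_0)$ at the center of the sector to the new $\eta_3$-axis; in these coordinates $\Gamma \cap \omega$ becomes a surface $\{\eta_1 = p(\eta_2, \eta_3): \eta_3 \in [1,2]\}$ with $\partial_{\eta_2}^2 p \sim 1$, and each cone plank $\theta$ is essentially contained in a product $\theta' \times [1,2]$, where $\theta' \subset \mc{N}_{R^{-1}}(\mathrm{Par})$ is a $2$D parabola plank.

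Writing physical coordinates as $(x,z) \in \R^2 \times \R$ dual to $(\eta_1,\eta_2,\eta_3)$, each slice $F_z(x) := \sum_\theta g_\theta(x,z)$ then has Fourier support (in $x$) in $\mc{N}_{R^{-1}}(\mathrm{Par})$. The core of the argument is to apply Theorem~\ref{Prefdec} slice by slice,
\[
\int_{A_z \cap B_R^{(2)}} |F_z|^6 \, dx \;\lesssim\; C_{\e,M} R^\e (\b^z)^2 \sum_\theta \int_{\R^2} |g_\theta(\cdot,z)|^2 \, dx,
\]
where $A_z$ is the 2D super-level set for $\b_z(x) := \sum_\theta |g_\theta(\cdot,z)|^2 *_x w_{M,\theta'}(x)$, and then integrate in $z$.

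For this scheme to recover the 3D super-level set $U_\b$ without loss, the condition ``$(x,z) \in U_\b$'' should be essentially equivalent, slice by slice, to ``$x \in A_z(\b)$''. Two observations make this work. First, the cone weight factorizes as $w_{M,\theta}(x,z) \sim w_{M,\theta'}(x)\, w_M(z)$ because the dual plank $\theta^*$ has dimensions $R \times R^{1/2} \times 1$ with the unit side aligned with the $z$-axis. Second, since the Fourier support of $g_\theta$ in the $\eta_3$ direction is contained in the interval $[1,2]$ of length $O(1)$, the amplitude $|g_\theta(x,z)|$ is essentially locally constant in $z$ on the unit scale (after peeling off an oscillating phase), and hence so is $\b_z(x)$. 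Combining these,
\[
\sum_\theta |g_\theta|^2 * w_{M,\theta}(x,z) \;=\; \int w_M(z-z')\, \b_{z'}(x)\, dz' \;\sim\; \b_z(x),
\]
so that $U_\b \cap \{z = \text{const}\}$ matches $A_z(\b)$ up to absolute constants. Fubini together with the slice estimate then produces the claimed bound.

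The main obstacle is in making the cylindrical reduction quantitatively precise. Strictly, the cone in new coordinates is not an honest parabolic cylinder (the coefficient of $\eta_2^2$ in $p$ varies with $\eta_3$), and within a width-$1$ sector $\omega$ the light-ray direction of a typical plank $\theta$ can tilt away from the $\eta_3$-axis by $O(1)$. Controlling these tilts---e.g.\ by further subdividing $\omega$ into sub-sectors on which deviations from a pure cylinder are subdominant, and verifying that the weight factorization and the local-constancy estimate persist---is the delicate part. None of these adjustments should affect the final exponent; they only require careful bookkeeping.
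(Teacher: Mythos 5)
Your high-level plan --- reduce the cone to a parabolic cylinder and apply Theorem~\ref{Prefdec} slice-by-slice via Fubini --- is the correct starting point, and matches the paper's overall strategy. But there is a genuine gap at the step you flag yourself, and you underestimate its severity. Within an angular sector of width $\delta$, after translating along the central light ray, the cone is within distance $O(\delta^3)$ of a true parabolic cylinder (Taylor: $\xi_3 = \xi_1 + \tfrac12\xi_2^2 + O(\xi_2^2|\xi_1-1|)$, with $|\xi_2|, |\xi_1-1|\lesssim\delta$). For your width-$\sim 1$ sector $\omega$ this error is $O(1)$, which is vastly larger than the $R^{-1}$-thickness required to resolve the caps $\theta$ at scale $R$. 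Consequently the slices $F_z$ do \emph{not} have Fourier support in $\mc{N}_{R^{-1}}(\mathrm{Par})$, and a single direct application of Theorem~\ref{Prefdec} at scale $R$ is not available; the obstruction is structural, not bookkeeping.

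What makes the argument work in the paper is a multi-scale \emph{recursion}. One first restricts to a sector $\Gamma_K$ of width $K^{-1}$ (so the cylindrical error is $K^{-3}$), so that the Fubini slices $h_{y_1}$ land in $\mc{N}_{CK^{-3}}(\tilde{\mc P})$. One then applies the parabola theorem only at scale $K^{3}$, decoupling into intermediate cone planks $\tau$ of angular width $\sim K^{-3/2}$ --- not into the final $\theta$. Parabolic rescaling of each $\tau$ returns a cone problem at a strictly smaller scale, yielding a functional inequality of the shape $C(R)\lesssim_\delta (\log R)^{O(1)} K^{\delta} C(R/K)$. Iterating this with $K=R^{\e}$ and $\delta=\e$ recovers the $R^{\e}$-loss. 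Running this recursion is nontrivial: one must pigeonhole the pieces so the square-function level $\b_0$ at scale $\tau$, the wave-packet amplitudes $\|\s_T^{1/2}g_\tau\|_\infty$, and the intermediate level $\b_K$ are uniform, and then prove the crucial closure estimate $\|\s_T^{1/2}g_\tau\|_\infty^{-4}\b_K^2\b_0^2\lesssim\b^2$ linking the level sets across scales. None of this appears in your sketch. Your proposed remedy --- ``subdividing $\omega$ into sub-sectors on which deviations from a pure cylinder are subdominant'' --- is precisely the beginning of this iteration, but carrying it through is the substance of the proof rather than a cosmetic adjustment, and the claim that it ``should not affect the final exponent'' is only true \emph{because} of the careful recursive bound; it is not automatic. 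Secondary issues (the weight factorization $w_{M,\theta}\sim w_{M,\theta'}\otimes w_M$ fails for planks whose light-ray direction tilts by $O(1)$ from the slicing axis; local constancy of $\b_z(x)$ in $z$ must be justified against the varying parabola geometry) are also resolved by the small-sector reduction and are likewise not innocuous at $\delta\sim 1$.
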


\begin{proof} Assume throughout the proof that $U_{\b}= U_{\b}\cap B_R$. By the triangle inequality, it suffices to prove Theorem \ref{thm:proof-cone-dec} with $\Gamma$ replaced by the subset
\[ \Gamma_K=\{\xi\in\R^3:\xi_3 = \sqrt{\xi_1^2+\xi_2^2},\quad \arg(\xi_1,\xi_2)\le K^{-1},\quad 1\le \xi_3\le 1+K^{-1}\},\]
where we will choose $K>0$ later. Then the theorem holds with implicit constant the maximum of $K^{O(1)}$ and the implicit constant we get for $\Gamma_K$. Let $C(R)$ denote the smallest implicit constant so that \eqref{mainineq} is true for all $g$ with Fourier support in the $R^{-1}$ neighborhood of $\Gamma_K$ so that our goal is to show that $C(R)\le C_\e R^\e$.

We begin with some preliminary observations about subsets of $\Gamma_K$. Let $I=[-\d,\d]$. Define 
\[ \Gamma_K(I)=\{\xi\in\Gamma_K:\arg(\xi_1,\xi_2)\in I\}. \]
By Taylor expansion, the points in $\Gamma_K(I)$ have $\xi_3$-coordinate which may be expressed as
\[ \xi_3= \xi_1+\frac{1}{2}\xi_2^2+O(\xi_2^2|\xi_1-1|)=\xi_1+\frac{1}{2}\xi_2^2+O(\xi_2^2K^{-1}). \]
It follows that 
\[ \Gamma_K(I)\subset \mc{N}_{C\d^2K^{-1}}\left(\{\xi_1(1,0,1)+(0,\xi_2,\frac{1}{2}\xi_2^2):|\xi_2|\le 2\pi \d,|\xi_1|<10\}\right). \]
This is a cylindrical neighborhood of a $C\d^2 K^{-1}$-neighborhood of a $\sim \d$-arc in the parabola $(t,t^2/2)$. 

For Schwartz $g\colon\R^3\to\C$ with $\spt\,\widehat{g}\subset\mc{N}_{R^{-1}}(\Gamma_K)$, we have by Fourier inversion 
\begin{align*}
g(x)&=\int_{\mc{N}_{R^{-1}}(\Gamma_K)}e^{2\pi i x\cdot\xi}\widehat{g}(\xi)d\xi \\
    &=\int_{\mc{N}_{CK^{-3}}(\mc{P})} e^{2\pi i (x_2,x_3)\cdot \xi'}\int_{[0,3]}e^{2\pi i x\cdot\xi_1(1,0,1)}\widehat{g}(\xi_1(1,0,1)+(0,\xi'))d\xi_1d\xi' 
\end{align*}
where $\mc{P}=\{(t,t^2/2):|t|<1\}$. Write new variables $x=y_1\frac{1}{\sqrt{2}}(1,0,1)+y_2(0,1,0)+y_3\frac{1}{\sqrt{2}}(-1,0,1)$. Then after a change of variables, the previous displayed line equals
\begin{align*} 
{\sqrt{2}}\int_{\mc{N}_{CK^{-3}}(\mc{\tilde{P}})} &e^{2\pi i (y_2,y_3)\cdot (\xi_2,\xi_3)}e^{2\pi i y_1\xi_3}\\
&\times \int_{[0,3]}e^{2\pi i y_1 \xi_1\sqrt{2}}\widehat{g}(\xi_1(1,0,1)+(0,\xi_2,\sqrt{2}\xi_3))d\xi_1d\xi_2d\xi_3, \end{align*}
where $\tilde{\mathcal P}=\{(t,t^2/2\sqrt{2}):|t|<1\}$. For each $y_1$, define the function 
\[ \widehat{h_{y_1}}(\xi_2,\xi_3)=\sqrt{2}e^{2\pi i y_1\xi_3}\int_{[0,3]}e^{2\pi i y_1 \xi_1\sqrt{2}}\widehat{g}(\xi_1(1,0,1)+(0,\xi_2,\sqrt{2}\xi_3))d\xi_1.  \]
For cone planks $\tau \in\Theta(C^{-1}K^{-3/2})$ intersecting $\Gamma_K$, 
\[ \widehat{h_{y_1}}(\xi')=\sum_\tau \int_{[0,3]}e^{2\pi i y_1\xi_1\sqrt 2}\widehat{g_\tau}(\xi_1(1,0,1)+(0,\xi'))d\xi_1 \]
and each 
\[ \int_{[0,3]}e^{2\pi i y_1\xi_1\sqrt 2} \widehat{g_\tau}(\xi_1(1,0,1)+(0,\xi'))d\xi_1 \]
is supported in a $K^{-3}$-neighborhood of a $\sim K^{-3/2}$-arc of $\mc{\tilde{P}}$. 

Repeat the pigeonholing steps in the proof of Theorem \ref{Prefdec}. As in that proof, each step involves a negligible additive error, which we will omit here for simplicity. Let $\sum_b \phi_b(y_2,y_3)\sim 1$ on $U_{\b}$, where each $b$ is a 2-dimensional ball of radius $K$ in the $(y_2,y_3)$ variables. Assume that each $\widehat{\phi_b}$ (2-dimensional Fourier transform here) is supported in $B^{(2)}(0,K^{-1})$  and $\phi_b$ decays rapidly off of $b$. Then after two pigeonholing steps, 
\[ \int_{U_\beta}|\sum_\theta g_\theta|^6 \lesssim (\log R)^{12} \sum_b\int_{U_{\b}\cap (\R\times b)}|\sum_{\tau\in\mc{T}(b)} g_\tau|^6  \]
where for $ \b_0$ uniform across $b$ and $\tau$, 
\[ \int_{U_{\b} + (\{0\}\times  R^\e \tau_0^*)}|g_\tau|^6\lesssim (\log R)\int_{U_{ \b_0}(\tau)+ (\{0\}\times  R^\e \tau_0^*)}|g_\tau|^6, \]
in which $U_{ \b_0}(\tau)=\{y:\sum_{\theta\subset \tau}|g_\tau|^2*w_{M,\tau}(y)\sim\b_0\}$ and $\tau_0^*$ is the dual set in the $(y_2,y_3)$ variables to $\pi(\tau)$, where $\tau\subset [0,K^{-1}]_{y_1}\times \pi(\tau)_{(y_2,y_3)} $. 

For each $g_\tau$, let $\sum_{T\in\mathbb{T}_\tau} \s_T=1$ be a partition of unity localized to a tiling of $\R^3$ by dual planks $T$ to $\tau$. We may assume that each $T$ intersects the ball $B_R$. Each $\s_T$ has Fourier transform contained in $\tau$. We have the pointwise equality $g_\tau(x)=\sum_{T\in\mathbb{T}_\tau}g_\tau(x)\s_T(x)$ and inequality  $|g_\tau(x)|\lesssim \sum_{T\in\mathbb{T}_\tau}\|g_\tau \s_T^{1/2}\|_{L^\infty(T)}\s_T^{1/2}$. For each $\tau$, dyadically pigeonhole $\mathbb T_\tau'\subset\mathbb T_\tau$ so that 
\[ \sum_b\int_{U_{\b}\cap(\R\times b)}|\sum_{\tau\in\mc{T}(b)} g_\tau|^6\lesssim (\log R)^6\sum_b\int_{U_{\b}\cap(\R\times b)}|\sum_{\tau\in\mc{T}(b)} \sum_{T\in\mathbb{T}_\tau'} \s_{T}g_\tau|^6    \]
and $\|\s^{1/2}_Tg_\tau\|_{L^\infty(\R^3)}$ is comparable to a single value for all $T\in\mathbb T_\tau'$. Next for each $b$, pigeonhole to find $\b_K(b)>0$ so that 
\[ \int_{U_{\b}\cap(\R\times b)}|\sum_{\tau\in\mc{T}(b)} \sum_{T\in\mathbb{T}_\tau'} \s_{T}g_\tau|^6  \lesssim (\log R)\int_{U_{\b}\cap(\R\times b)\cap V_{\b_K}(b)}|\sum_{\tau\in\mc{T}(b)} \sum_{T\in\mathbb{T}_\tau'} \s_{T}g_\tau|^6  \]
where $V_{\b_K}(b)=\{y:\sum_{\tau\in\mc{T}(b)}|\sum_{T\in\mathbb{T}_\tau'}\s_Tg_\tau|^2*w_{M,\tau}(y)\sim\b_K(b)\}$. The final pigeonholing step is to refine the collection of $b$ so that $\b_K(b)$ is comparable to a single value $\b_K$. Assume from now on that we are only working with the final collection of pigeonholed $b$. 

It follows from Theorem \ref{Prefdec} and Fubini's theorem that
\[ \int_{U_\b}|\sum_\theta g_\theta|^6 \lesssim_\d K^\d \sum_b\int_{\pi_1(U_{\b}\cap V_{\b_K})}\left(\b_K^2\sum_{\tau\in\mc{T}} \int_{ R^\e b}|\sum_{T\in\mathbb{T}_\tau'}\s_T g_{\tau}\p_b|^2dy_2dy_3\right)dy_1   \]
where $\pi_1$ is orthogonal projection onto the span of $\frac{1}{\sqrt{2}}(1,0,1)$. 
Then by the pigeonholing of the wave packets $T$, for each $b$, the corresponding summand on the right-hand side above is bounded by 
\[ \int_{\pi(U_{\b}\cap V_{\b_K})} \left(\|\s_T^{1/2} g_\tau\|_\infty^{-4}\b_K^2\sum_{\tau\in\mc{T}(b)} \int_{ R^\e b}\|\s_T^{1/2}g_\tau\|_\infty^4|\sum_{T\in\mathbb{T}_\tau'}\s_T g_{\tau}\p_b|^2dy_2dy_3\right)dy_1\]
where $\|\s_T^{1/2}g_\tau\|_\infty$ is comparable to a single value. Note that
\[ \|\s_T^{1/2}g_\tau\|_\infty^4|\sum_{T\in\mathbb{T}_\tau'}\s_T g_{\tau}|^2\lesssim \sum_{T\in\mathbb{T}_\tau'}\|\s_T^{1/2}g_\tau\|_\infty^6\s_T\lesssim |g_\tau|^6*w_{N,\tau} \]
for any decay rate $N>0$. Using this and summing over $b$ leads to the expression 
\[ \sum_{\tau\in\mc{T}}\int_{U_{\b_0}(\tau)+(\{0\}\times R^\e\tau_0^*)}|g_\tau|^6.   \]
The summary of our inequality so far is that
\[ \int_{U_\b}|\sum_\theta g_\theta|^6 \lesssim_\d K^\d (\log R)^C\|\s_T^{1/2}g_\tau\|_\infty^{-4}\b_K^2\sum_{\tau\in\mc{T}}\int_{U_{ \b_0}(\tau)+(\{0\}\times R^\e \tau_0^*)}|g_\tau|^6. \]
Note that we replaced the sets $\mc{T}(b)$ by $\mc{T}$ in order to sum over all the $b$. 

Now use (cylindrical) parabolic rescaling and the definition of the constant $C(R)$ to bound each integral by 
\[\int_{U_{ \b_0}(\tau)+(\{0\}\times R^\e \tau_0^*)}|g_\tau|^6\le C(R/K)\b_0^2\sum_{\theta\subset\tau}\int_{\R^3}|g_\theta|^2 .  \]

We will show that 
\[ \|\s_T^{1/2}g_\tau\|_\infty^{-4}\b_K^2\b_0^2\lesssim \b^2. \]
By the pigeonholing procedure, there is some $b$ such that for some $y_b\in U_{\b}\cap (\R\times b)$,
\begin{align*} 
\b_K&\sim \sum_{\tau\in\mc{T}(b)}|\sum_{T\in\mathbb{T}_\tau'}\s_Tg_\tau|^2* w_{M,\tau_0}(y_b)
\end{align*}
and for each $\tau\in\mc{T}(b)$, there is $y_{b,\tau}\in U_{\a,\b}\cap(\R\times b)\cap \big(U_{ \b_0}(\tau)+(\{0\}\times R^\e \tau_0^*)\big)$ such that 
\[ \b_0\sim \sum_{\theta\subset\tau}|g_\theta|^2*w_{M,\theta}(y_{b,\tau}). \]
Since $y_b$ and each $y_{b,\tau}$ are contained in the same translate of $\{0\}\times R^\e \tau_0^*\subset\theta^*$, 
\[ |g_\theta|^2*w_{M,\theta}(y_{b,\tau})\lesssim |g_\theta|^2*w_{M,\theta}(y_b). \]
Since for all $y\in\R^3$ 
\[ \|\s_T^{1/2}g_\tau\|_{L^\infty(\R^3)}^{-2}|\sum_{T\in\mathbb{T}_\tau'}\s_Tg_\tau|^2*w_{M,\tau_0}(y)\lesssim \sum_{T\in\mathbb{T}_\tau'}\s_T*w_{M,\tau_0}(y)\lesssim 1,  \]
it follows that 
\[ \|\s_T^{1/2}g_\tau\|_\infty^{-4}\b_K^2\b_0^2\lesssim \b^2\lesssim \left(\sum_{\tau\in\mc{T}(b)}\sum_{\theta\subset\tau}|g_\theta|^2*w_{M,\theta}(y_b)\right)^2\sim\b^2. \]

Conclude that 
\[ C(R)\le C_\d (\log R)^C K^\d C(R/K), \]
where $K$ and $\d>0$ are to be chosen. Iterating $m$ times leads to 
\[ C(R)\le C_\d^m(\log R)^{Cm}K^{m\d}C(R/K^m). \]
Choose $m$ so $K^m\le R<K^{m+1}$. Then $C(R/K^m)\lesssim K^C$, so 
\[ C(R)\lesssim C_\d ^m(\log R)^{Cm}R^\d K^C. \]
Finally choose $K=R^\e$ and $\d=\e$ so that 
\[ C(R)\lesssim  C_\e^{2\e^{-1}} (\log R)^{C\e^{-1}} R^\e R^{C\e}. \]
Since $\log R\le \e^{-2}R^{\e^2}$, this concludes the proof. 
\end{proof}

\section{Sharpness}\label{sec:sharpness}
In this section, we show that Theorem \ref{thm:rect1} is sharp by constructing a random set of (almost) $\rho$-separated circles such that with high probability, there is only one value of $\mu$ such that every $1\times R^{1/2}\times R$-lightplank contains about $\mu$-many ``circles.'' The proof relies on standard large deviation estimates for sums of Bernoulli random variables, which we record in Appendix \ref{appendix}.

\begin{thm}\label{thm:sharp}
    For every $\varepsilon > 0$, the following holds for each $R\ge R_0(\varepsilon)$ sufficiently large and each $R^{\varepsilon} \le \rho \le R^{\frac12}$.
    
    With probability at least $.9$, there is a random set $X\subset[0,R]^3$ of cardinality $\sim R^{3+\varepsilon}\rho^{-3}$ such that every $\rho$-cube contains at most $R^\varepsilon$ points of $X$, and such that every $1\times R^{1/2}\times R$-lightplank $P$ in a maximal pairwise incomparable collection of such lightplanks contained in $[0,R]^3$ contains $\sim R^{3/2+\varepsilon}\rho^{-3}$-points of $X$. In particular,
    \[
    (R^{3/2+\varepsilon}\rho^{-3})^{4/3}|\mathcal P| \gtrapprox |X|^{4/3}.
    \]
\end{thm}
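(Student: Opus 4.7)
I would construct $X$ by random Bernoulli sampling from a unit-scale lattice. Fix $X_0 \subset [0,R]^3$ a maximal $1$-separated set, so $|X_0|\sim R^3$, and include each $x\in X_0$ in $X$ independently with probability $p := R^\varepsilon\rho^{-3}$; the hypothesis $\rho\ge R^\varepsilon$ guarantees $p\le 1$. A direct volume count gives
\[
\E|X| = pR^3 \sim R^{3+\varepsilon}\rho^{-3},\qquad \E|X\cap Q| = p\rho^3 = R^\varepsilon,\qquad \E|X\cap P| = pR^{3/2} = R^{3/2+\varepsilon}\rho^{-3}
\]
for every $\rho$-cube $Q\subset[0,R]^3$ and every $1\times R^{1/2}\times R$-lightplank $P\subset[0,R]^3$.

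Next I would promote these expectations to pointwise estimates by Chernoff bounds (as recorded in Appendix~\ref{appendix}). Since $|X\cap Q|$ and $|X\cap P|$ are sums of independent Bernoulli random variables with means $R^\varepsilon$ and $R^{3/2+\varepsilon}\rho^{-3}\ge R^\varepsilon$ respectively (using $\rho\le R^{1/2}$), one obtains tail bounds of the form
\[
\Prob\bigl(|X\cap Q|>A R^\varepsilon\bigr)\le e^{-c_A R^\varepsilon},\qquad \Prob\bigl(\bigl||X\cap P|-\E|X\cap P|\bigr|>\tfrac{1}{2}\E|X\cap P|\bigr)\le e^{-c R^\varepsilon},
\]
together with the analogous concentration statement for $|X|$ itself.

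A union bound over the $\le R^3$ many $\rho$-cubes and the $\lesssim R^2$ many pairwise incomparable $1\times R^{1/2}\times R$-lightplanks in $[0,R]^3$ (parameterized by $\sim R^{1/2}$ directions in $S^1$ and $\sim R^{3/2}$ translates per direction) then yields total failure probability at most $R^{O(1)}e^{-cR^\varepsilon}<0.1$ as soon as $R\ge R_0(\varepsilon)$ is large enough that $R^\varepsilon\ge C\log R$. On the complementary event, $X$ satisfies the three claimed properties simultaneously. The final displayed inequality $(R^{3/2+\varepsilon}\rho^{-3})^{4/3}|\mathcal{P}|\gtrapprox |X|^{4/3}$ then reduces to the algebraic identity that both sides are comparable to $R^{4+4\varepsilon/3}\rho^{-4}$.

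The only point requiring any attention is ensuring that the exponential concentration dominates the polynomial union bound, and this is exactly the role of the hypotheses $\rho\ge R^\varepsilon$ and $R\ge R_0(\varepsilon)$: they guarantee the relevant means are $\gtrsim R^\varepsilon \gg \log R$, which is precisely what a Chernoff-plus-union-bound strategy needs.
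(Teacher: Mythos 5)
Your proof is correct and follows the same basic strategy as the paper: random Bernoulli sampling from a discrete set with parameter $p = R^\varepsilon\rho^{-3}$, followed by Chernoff bounds (Proposition~\ref{prop:large-deviations}) and a union bound over the $\lesssim R^3$ cubes and $\lesssim R^2$ pairwise incomparable lightplanks. The one place you differ is the choice of underlying sampling set: you sample from a maximal $1$-separated set $X_0$ filling all of $[0,R]^3$, whereas the paper first passes to a sparse grid $Y = \bigsqcup_i Q_i$, where each $Q_i$ is a $\rho$-cube sitting at the center of a $100\rho$-cube in a tiling. The paper's choice makes the cluster structure of $X$ (few points per well-separated $\rho$-cube) geometrically transparent, but it also requires implicitly that every lightplank $P$ satisfies $|Y\cap P| \sim |P|$, which is not obviously uniform since $P$ has thickness $1 \ll \rho$ and could conceivably thread the gaps between the $Q_i$ for special directions. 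Your version sidesteps this entirely: since $X_0$ is uniformly spread, $\E|X\cap P| \sim p|P|$ holds automatically for every plank, and likewise $\E|X\cap Q| \sim p\rho^3$ for every $\rho$-cube, so the concentration estimates apply directly. In that sense your construction is slightly cleaner, and the remaining arithmetic — both sides of the displayed inequality equal $R^{4+4\varepsilon/3}\rho^{-4}$ — checks out.
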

\begin{proof}
    We may assume $R > 10$. Let $\varepsilon>0$, and let $\rho \ge R^\varepsilon$, and consider the grid in $[0,R]^3$ formed by taking all closed cubes $\mathbf Q_i$ of side length $100\rho$. Within each cube $\mathbf Q_i$, choose a concentric sub-cube $Q_i$ of side length $\rho$. The resulting cubes $Q_i$ are at least $10\rho$-separated (being generous). Let $Y = \bigsqcup_{i}Q_i$, so $|Y| = 100^{-3}R^3$. We will choose points randomly and independently from $Y$ with probability $p = R^\varepsilon \rho^{-3}<1$, and let $X$ be the random set of points we obtain. By Proposition \ref{prop:large-deviations}, with probability at least $1-2e^{-\frac12|Y|p}$, $|X|\in [\frac{1}{10}|Y|p,10|Y|p]$.

    For each $i$, the expected cardinality of $X\cap Q_i$ is $\rho^3p$. Consider the event that $Q_i$ contains significantly more than its expected share of points of $X$; by Proposition \ref{prop:large-deviations},
    \begin{align*}
        \Prob(\frac{|X\cap Q_i|}{|Q_i|} \ge 10p) \le e^{-5|Q_i|p}=e^{-5R^{\varepsilon}}.
    \end{align*}
    The number of such events as $i$ ranges over the cubes $Q_i$ is $100^{-3}\rho^{-3}R^3$. Hence the probability that every cube $Q_i$ contains fewer than $10|Q_i|p$ points of $X$ is at least $1-100^{-3}\rho^{-3}R^3e^{-5R^{\varepsilon}}$, which is at least $.99$, provided $R$ is sufficiently large depending on $\varepsilon$. Hence, $X$ is a set with the property that every $\rho$-cube in $\R^3$ contains at most $10\rho^3p = 10R^\varepsilon$ points. It is plain to see from the proof of Theorem \ref{thm:main} that the proof continues to hold with a $R^{O(\varepsilon)}$-loss for such sets.

    Let $\mathcal P$ be a maximal pairwise incomparable collection of $1\times R^{1/2}\times R$-lightplanks contained in $[0,R]^3$. The cardinality of such a collection is $\sim R^2$. By Proposition \ref{prop:large-deviations}, for each $P\in \mathcal P$, the probability that $|X\cap P| \notin [\frac{1}{10}|P|p,10|P|p]$ is at most $2e^{-\frac{1}{2}|P|p}$. In particular, with probability at least $1-2|\mathcal P|e^{-\frac{1}{2}|P|p}$, every plank $P\in\mathcal P$ satisfies $|X\cap P| \in [\frac{1}{10}|P|p,10|P|p]$. The probability is at least $.99$, so long as $\rho\le R^{1/2}$ and $R$ is sufficiently large.
    
    By Theorem \ref{thm:main}, whose hypotheses hold with probability at least $.9$, with $\mu = R^{3/2+\varepsilon}{\rho^{-3}}$,
    \begin{align*}
        (R^{3/2+\varepsilon}{\rho^{-3}})^{4/3}|\mathcal P| \lesssim_\varepsilon R^\varepsilon |X|^{4/3} \sim R^\varepsilon\cdot R^{4+\frac{4}{3}\varepsilon}\rho^{-4}.
    \end{align*}

    On the other hand, by direct calculation,
    \[
    (R^{3/2+\varepsilon}{\rho^{-3}})^{4/3}|\mathcal P| = R^{4+\frac43\varepsilon}\rho^{-4}.\qedhere
    \]
\end{proof}

\newpage
\appendix
\section{Large deviations for sums of Bernoulli random variables}\label{appendix}
Here we record the estimates of large deviations for sums of Bernoulli random variables we need to establish the sharpness of Theorem \ref{thm:rect1}. These are standard applications of Chernoff bounds, but we include the proofs for completeness.
\begin{prop}\label{prop:large-deviations}
    Let $\{\xi_i:i=1,\dotsc,n\}$ be independent 0,1-valued random variables of mean $p$, and let $S_n = \sum_{i=1}^n\xi_i$. The following estimates hold
\begin{itemize}
\item[(i)] $p_n=\Prob(S_n>10np) \le e^{-5np}$
\item[(ii)] $q_n=\Prob(S_n<\frac{1}{10}np) \le e^{-\frac{np}{2}}$.
\end{itemize}
\end{prop}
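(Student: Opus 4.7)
The plan is to apply the classical Chernoff bounding technique. For each $\lambda > 0$, Markov's inequality gives $\Prob(S_n > t) \le e^{-\lambda t}\,\E[e^{\lambda S_n}]$, and by independence the moment generating function factors as
\[
\E[e^{\lambda S_n}] = \prod_{i=1}^n \E[e^{\lambda \xi_i}] = (1-p+pe^{\lambda})^n.
\]
Using the elementary inequality $1+x \le e^x$ then yields the uniform bound $\E[e^{\lambda S_n}] \le \exp(np(e^{\lambda}-1))$, and symmetrically $\E[e^{-\lambda S_n}] \le \exp(np(e^{-\lambda}-1))$.

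For part (i), I would take $t = 10np$ and choose a single convenient value of $\lambda$, say $\lambda = 1$. Combining the displays above, the resulting Chernoff estimate reads
\[
\Prob(S_n > 10np) \le \exp\bigl(np(e^{\lambda}-1-10\lambda)\bigr),
\]
and a direct numerical check at $\lambda=1$ gives $e-1-10 = e-11 < -5$, which delivers the required bound $p_n \le e^{-5np}$. No optimization in $\lambda$ is necessary since the constants in the statement are quite loose.

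For part (ii), I would run the mirror argument, applying Markov's inequality to $e^{-\lambda S_n}$ for $\lambda > 0$. This gives $\Prob(S_n < t) \le e^{\lambda t}\,\E[e^{-\lambda S_n}] \le \exp\bigl(\lambda t - np(1-e^{-\lambda})\bigr)$. Taking $t = np/10$ and again $\lambda=1$ reduces matters to checking the scalar inequality $\lambda/10 + e^{-\lambda} - 1 \le -1/2$, i.e.\ $\tfrac{1}{10} + \tfrac{1}{e} \le \tfrac{1}{2}$, which holds since $\tfrac{1}{10}+\tfrac{1}{e} \approx 0.468$. This delivers $q_n \le e^{-np/2}$.

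There is no substantive obstacle here: the inputs (Markov, independence, $1+x \le e^x$) are all standard, and the only task is to verify two one-variable numerical inequalities in $\lambda$. I would write the upper-tail and lower-tail arguments in parallel so the symmetry is clear, and just pick $\lambda=1$ throughout rather than optimizing, since the generous constants $10$, $\tfrac{1}{10}$, $5$, $\tfrac{1}{2}$ in the statement leave ample slack.
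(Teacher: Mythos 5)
Your proposal is correct and matches the paper's argument essentially step for step: both apply the Chernoff exponential-moment technique with the unoptimized choice $\lambda = 1$, bound the moment generating factor via $1+x \le e^x$, and reduce each part to a one-variable numerical inequality. The only cosmetic difference is in part (ii), where the paper first reduces to an upper-tail bound for $T_n = np - S_n$ while you apply Markov's inequality directly to $e^{-\lambda S_n}$; these are the same computation written two ways.
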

\begin{proof}
First we prove (i). Let $t > 0$ be an extra parameter we have at our disposal. By Tchebychev's inequality and independence of the $\xi_i$,
\begin{align*}
p_n = \Prob[\sum_{i=1}^n\xi_i > 10np] &\le \Prob[e^{t\sum_{i=1}^n\xi_i}\ge e^{10tnp }] \\
&\le e^{-10tnp }\E[e^{t\sum_{i=1}^n\xi_i}] \\
&= e^{-10t np }\prod_{i=1}^n \E e^{t\xi_i}.
\end{align*}
A direct calculation shows
\[
\E e^{t\xi_i} = e^tp + (1-p) = 1 + (e^t-1)p.
\]
By what we have so far,
\[
p_n \le e^{-10tnp } [1+(e^t-1)p]^n \le e^{-10 t np } e^{(e^t-1)np} = [e^{-10t+(e^t-1)}]^{np}.
\]
We could optimize to choose the best value of $t$, but setting $t = 1$ is sufficient because it shows
\[
p_n \le [e^{-8.28...}]^{np}\le e^{-5np}.
\]

The proof of (ii) is very similar.  Let $\eta_i = p-\xi_i$, and let $T_n = \sum_{i=1}^n \eta_i$. Then the reader can easily verify
\[
\Prob(S_n<\frac{1}{10}np) = \Prob(T_n > \frac{9}{10}np).
\]
By Tchebychev and independence of the variables $\eta_i$:
\begin{align*}
\Prob(T_n>\frac{9}{10}np) &\le e^{-\frac{9}{10}tnp}\prod_{i=1}^n\mathbb E e^{t\eta_i}\\
&= e^{-\frac{9}{10}tnp}(e^{tp}(1-p)+e^{t(p-1)}p)^n\\
&= e^{\frac{1}{10}tnp}(1+(e^{-t}-1)p)^n\\
&\le e^{\frac{1}{10}tnp}e^{(e^{-t}-1)np} = [e^{\frac{t}{10}+(e^{-t}-1)}]^{np}.
\end{align*}
Setting $t = 1$, we have
\[
q_n = \Prob(T_n>\frac{9}{10}np) \le [e^{-0.532...}]^{np} \le e^{-\frac{np}{2}}.\qedhere
\]
\end{proof}

\bibliographystyle{amsplain}
\bibliography{main}

\end{document}